\definecolor{wine-stain}{rgb}{0.5,0,0}
\newcommand{\red}[1]{\textcolor{red}{#1}}
\newtheorem{thm}{Theorem}[section]
\newtheorem{prop}[thm]{Proposition}
\newtheorem{cor}[thm]{Corollary}
\theoremstyle{definition}
\newtheorem{rem}[thm]{Remark}
\newtheorem*{ques*}{Question}
\newtheorem*{thm*}{Theorem}
\newtheorem*{rem*}{Remark}
\newtheorem*{rems*}{Remarks}
\newtheorem*{exs*}{Examples}
\newtheorem*{mthm*}{Main Theorem}
\numberwithin{equation}{section}
\newcommand{\al}{\alpha}
\newcommand{\be}{\beta}
\newcommand{\ga}{\gamma}
\newcommand{\del}{\delta}
\newcommand{\la}{\lambda}
\newcommand{\om}{\omega}
\newcommand{\vp}{\varphi}
\newcommand{\La}{\Lambda}
\newcommand{\ul}{\underline}
\newcommand{\ol}{\overline}
\renewcommand{\d}{\partial}
\newcommand{\ddbar}{\sqrt{-1}\d\overline{\d}}
\newcommand{\ii}{\sqrt{-1}}
\newcommand{\RR}{\mathbb{R}}
\newcommand{\CC}{\mathbb{C}}
\newcommand{\PP}{\mathbb{P}}
\newcommand{\sK}{\mathcal{K}}
\newcommand{\sM}{\mathcal{M}}
\newcommand{\sO}{\mathcal{O}}
\newcommand{\fh}{\mathfrak{h}}
\newcommand{\Rc}{\mathrm{Ric}}
\newcommand{\Fut}{\mathrm{Fut}}
\renewcommand*{\eqref}[1]{%
	\hyperref[{#1}]{\textup{\tagform@{\ref*{#1}}}}%
}
\title[Twisted and Coupled cscK metrics on ruled surfaces]{Twisted and Coupled constant scalar curvature K\"ahler metrics on minimal ruled surfaces}
\author{Ramesh Mete}
\address{Department of Mathematics, Indian Institute of Technology (IIT), Bombay, Powai, Mumbai - 400076, India.}
\email{rameshm@math.iitb.ac.in, rameshmete@alum.iisc.ac.in, ramesh2025m@gmail.com}
\subjclass[2020]{Primary 53C55; Secondary 53C25, 58J60, 58J90, 32Q15, 35R01, 34B60}
\keywords{Twisted cscK metrics, Coupled cscK metrics, Ruled surfaces, Calabi ansatz, Generalized Futaki invariant, Chen-Cheng invariant}
\thanks{Work supported in part by an Institute Post Doctoral Fellowship (IPDF) from the Indian Institute of Technology (IIT), Bombay.}
\begin{document}
	
\maketitle
	
\begin{abstract}
In this paper, we study the existence of twisted constant scalar curvature K\"ahler (cscK) metrics and non-existence of coupled cscK metrics on minimal ruled surfaces over a Riemann surface of genus $2$. Moreover, we give a bound for the Chen-Cheng invariant related to Chen's continuity path for cscK problem on these ruled surfaces.
\end{abstract}

\section{Introduction}
	
A central problem in K\"ahler geometry for several decades is to find {\em canonical metrics}, and the important examples of canonical metrics are {\em constant scalar curvature K\"ahler} ({\em cscK}) metrics. A K\"ahler metric $\om$ on a K\"ahler manifold $M$ is called a {\em cscK metric} if its scalar curvature $R(\om)$ is constant on $M$. Given any K\"ahler class $[\om_0]$ on $M$, cscK metrics in $[\om_0]$ are exactly critical points of the {\em $K$-energy functional} $\sM_{\om_0}$, which was introduced by Mabuchi \cite{Mab86}. There are some obstructions for existence of cscK metrics in $[\om_0]$ and one of them is non-vanishing of the {\em Futaki invariant} \cite{Fut83, Fut83b}, which is an invariant of $[\om_0]$ and plays an important role in K\"ahler geometry (see \cite{Fut88-bk}). More precisely, if the K\"ahler class $[\om_0]$ on a compact K\"ahler manifold $M$ admits a cscK metric, then this Futaki invariant must be zero. But the converse is not true in general. Along with the cscK problem, both the {\em twisted version} and {\em coupled version} of it have been studied recently.

\vspace*{1mm}
Let $(M, \om_0)$ be a K\"ahler manifold of complex dimension $n$. Suppose $\chi$ is a smooth closed real $(1,1)$-form on $M$. Following Stoppa \cite{Sto09-tcscK}, a K\"ahler metric $\om \in [\om_0]$ is called a {\em $\chi$-twisted cscK metric} if it satisfies 
\begin{equation}\label{eq:twisted-cscK}
R(\omega) - \La_{\om}\chi = \ul{R} - \ul{\chi},
\end{equation}
where $R(\om)$, $\ul R$ are the scalar curvature of the metric $\om$ and its average, and the constant $\ul\chi$ is the average of the trace function $\La_\om\chi := \frac{n \chi \wedge \om^{n-1}}{\om^n}$, i.e. $$ \ul{R} := n \frac{2\pi c_1(M)\cdot [\om_0]^{n-1}}{[\om_0]^n},~~ \ul\chi := n \frac{[\chi]\cdot [\om_0]^{n-1}}{[\om_0]^n}.$$ 
Here, $c_1(M)$ is the first Chern class of the K\"ahler manifold $M$. One can see \cite{Sto09-tcscK, Der15-tcscK, Che18-cont-path-tcscK, Hashimoto19-tcscK, Zen19-tcscK, ChenCheng21-cscK-exis, JianShiSong19-cscK-min-model, SDyre22-cscK-min-model} and references therein for more details about the twisted cscK equation. In particular, when $\chi = 0$ the equation \eqref{eq:twisted-cscK} is nothing but the {\em cscK equation}
\begin{equation}\label{eq:cscK}
R(\om)=\ul{R}.
\end{equation}
Note that in order to get the equation \eqref{eq:twisted-cscK} the cscK equation \eqref{eq:cscK} has been twisted with the following important equation 
\begin{equation}\label{eq:J-equ}
\La_{\om} \chi = \ul\chi,
\end{equation}
which is called the {\em J-equation} (or {\em Donaldson's equation}) and was introduced in \cite{Don99-j-eq} (see also \cite{Chen00a-low-bd-Mab}), where it was assumed that $\chi$ is also a K\"ahler metric. Observe that if the K\"ahler metric $\om$ solves the J-equation \eqref{eq:J-equ} for the fixed K\"ahler metric $\chi>0$, then $\om$ is a $\chi$-twisted cscK metric if and only if it is a cscK metric. But there may exists a K\"ahler metric solving the twisted cscK equation \eqref{eq:twisted-cscK} which is neither a cscK metric nor solves the J-equation \eqref{eq:J-equ}. Recently, the J-equation has been studied extensively, for instance - see \cite{Chen00a-low-bd-Mab, Chen-parabolic-flow, Weinkove04-convergence-surface, Weinkove06-convergence-higher-dim, SW08-J-flow-conv, FL-Calabi ansatz-2013, FLSW14-J-flow-on-Kah-surf-boundary-case, LejSze15-J-flow-stab, ColSze17-J-flow-toric, gaoChen21-J-dHYM, DatPin2021-gCMA-proj-mfds, Song20-NM-crit-J-eq, SDyre20-j-func, DMS-min-slopes-cmpx-Hess-eq, To23-degenrate-tJ-flow} and references therein.

\vspace*{1mm}
In \cite{Datar-Pingali Coup-cscK}, Datar-Pingali introduced the {\em coupled cscK equation} generalizing the coupled version of K\"ahler-Einstein equation \cite{Hultgren-Witt Nystrom2018} (see also \cite{Hultgren2019,Delcroix-Hultgren2021}) and it is a coupling of an equation similar to \eqref{eq:twisted-cscK} with another equation involving Ricci curvatures. Let $M$ be a K\"ahler manifold of complex dimension $n$ and $(\om_i)_{i=0}^{N}$ be a $(N+1)$-tuple of K\"ahler metrics on $M$. Define $$ \omega_{\text{sum}} := \la\sum_{i=0}^N \om_i, $$ where $\la=1$ when $c_1(M)>0$, $\la=-1$ when $c_1(M)<0$ and $\la=0$ when $c_1(M)=0$. Note that $\la\om_{\text{sum}}$ is a K\"ahler metric on $M$ when $\la=\pm 1$. Following \cite{Datar-Pingali Coup-cscK}, we say $(\om_i)$ a {\em coupled cscK} metric if the following two conditions are satisfied
\begin{equation}\label{eq:all-Ric-curv-are-equal-intro}
\Rc(\om_0)=\cdots =\Rc(\om_{N}),
\end{equation}
and
\begin{equation}\label{eq:coupled-cscK-2nd-equ-intro}
R(\om_0) - \La_{\om_0}\om_{\text{sum}} = \ul{R} - \ul{\om_{\text{sum}}}.
\end{equation} 
Here, $\ul R$ is the average scalar curvature of the K\"ahler class $[\om_0]$ and $$\ul{\om_{\text{sum}}} = n \frac{[\om_{\text{sum}}]\cdot[\om_0]^{n-1}}{[\om_0]^n}.$$We observe that the equation \eqref{eq:all-Ric-curv-are-equal-intro} can also be re-written as
\begin{equation*}
\frac{1}{[\om_0]^n} \om_0^{n} = \cdots = \frac{1}{[\om_{N}]^n} \om_{N}^{n}.
\end{equation*}
In particular, for $N=1$ the pair of K\"ahler metrics $(\om_0, \om_1)$ is a coupled cscK metric if and only if $\Rc(\om_0) = \Rc(\om_1)$ and $\om_0$ is a $\la \om_1$-twisted cscK metric. Therefore, if $\om$ and $\chi$ are two K\"ahler metrics on a {\em Fano manifold} $M$ (i.e. $c_1(M)>0$ and so we take $\la=1$), then the pair $(\om, \chi)$ is a coupled cscK metric if and only if $\Rc(\om) = \Rc(\chi)$ and $\om$ is a $\chi$-twisted cscK metric.

\vspace*{1mm}
In this paper, we provide example of certain compact K\"ahler surfaces such that any pair of (normalized) K\"ahler classes don't admit a coupled cscK metric but there are some pairs of (normalized) K\"ahler classes which admit twisted cscK metrics.

\vspace*{1mm}
Let $X :=\PP(L\oplus \sO)$ be a {\em minimal ruled (complex) surface} (see $\S$ \ref{subsec:ruled-surf-over-genus-2-Riemm-surf}), where $L$ is a degree $-1$ holomorphic line bundle over a genus $2$ Riemann surface $(\Sigma, \om_\Sigma)$ equipped with a K\"ahler metric $\om_{\Sigma}$ whose scalar curvature $R(\om_{\Sigma})=-2$ and $\sO$ is the trivial line bundle over $\Sigma$. Further, we assume that there is a Hermitian metric $h$ on $L$ whose Chern curvature form $\ga(h)=-\om_\Sigma$. In particular, $c_1(L)<0$. In fact, $X$ is a $\CC\PP^1$ bundle over the Riemann surface $\Sigma$ and it is an example of {\em pseudo-Hirzebruch surfaces} mentioned in \cite{Ton98-extrem}.

\vspace*{1mm}
Let $\mathrm{C}$ be the Poincar\'{e} dual of a fibre of $X$ (i.e. $\mathrm{C}$ is a copy of the Riemann sphere $\mathbb{S}^2$ sitting inside $X$) and let $D_\infty$ be the {\em infinity divisor} of $X$, i.e. the image of the sub-bundle $L\oplus \{0\} \subset L\oplus\sO$ under the bundle projectivization map to $X$ (see $\S$ \ref{subsec:ruled-surf-over-genus-2-Riemm-surf} for more details). It is known \cite{Fuj92, LeBrun-Singer93, Ton98-extrem} that any K\"ahler class $\Omega$ on the ruled surface $X$ is of the form $\Omega = a_1 \mathrm{C} + a_2 D_\infty$ for some positive constants $a_1, a_2 > 0$. For any $m>0$ we consider the normalized K\"ahler class $\Omega_{m}:= 2\pi(\mathrm{C} + m D_{\infty})$ on the ruled surface $X$.

\vspace*{1mm}
Our first result is that there are certain pairs of normalized K\"ahler classes $(\Omega_a, \Omega_b)$ on the minimal ruled surface $X$ which admit K\"ahler metrics $\om\in\Omega_a$ and $\chi\in\Omega_b$ such that $\om$ is a $\chi$-twisted cscK metric.

\begin{thm}\label{thm:exis-of-twisted-cscK-on-min-ruled-surfaces}
Suppose $\Omega_a = 2\pi(\mathrm{C} + a D_{\infty})$ and $\Omega_b = 2\pi(\mathrm{C} + b D_{\infty})$ are two normalized K\"ahler classes on the ruled surface $X = \PP(L\oplus \sO)$ defined above with $b\geq \frac{a(5a+4)}{2(1+a)}$ and $a>0$. Then there exists a K\"ahler metric $\om\in\Omega_a$ whose momentum profile $\phi$ (see $\S$ \ref{subsec:momentum-profile}) is given by $$\phi(\tau)=\frac{a\tau-\tau^2}{a}, ~~~\tau\in[0,a]$$ such that $\om$ is a $\chi$-twisted cscK metric for some K\"ahler metric $\chi$ in the class $\Omega_b$.
\end{thm}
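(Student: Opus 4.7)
The plan is to reduce the twisted cscK equation \eqref{eq:twisted-cscK} to an ODE via the Calabi ansatz, adapted to the pseudo-Hirzebruch surface $X$ by Hwang--Singer, T{\o}nnesen-Friedman, and LeBrun--Singer. Every $S^1$-invariant K\"ahler metric in a class $\Omega_c = 2\pi(\mathrm{C} + c\,D_\infty)$ is encoded by a momentum profile $\phi \colon [0,c] \to \RR$ that is strictly positive on $(0,c)$, vanishes at both endpoints, and has prescribed boundary slopes ensuring the metric extends smoothly across the zero- and infinity-sections. The given profile $\phi(\tau) = (a\tau - \tau^2)/a$ visibly satisfies these conditions on $[0,a]$, so it defines a genuine K\"ahler metric $\om \in \Omega_a$.

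In the Calabi ansatz both $R(\om)$ and the trace $\La_\om\chi$ of an ansatz metric $\chi \in \Omega_b$ with momentum profile $\psi$ on $[0,b]$ admit explicit expressions in $\tau$, $R(\om_\Sigma) = -2$, and the profiles $\phi,\psi$ together with their derivatives (via the standard Hwang--Singer formulas on ruled surfaces). After computing the topological averages $\ul{R}$ and $\ul\chi$ from the intersection numbers of $\Omega_a$ and $\Omega_b$ with $c_1(X)$ and themselves, the twisted cscK equation \eqref{eq:twisted-cscK} becomes a linear second-order ODE in $\psi$, whose coefficients depend only on the fixed $\phi$ and on the parameters $a, b$.

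With $\phi(\tau) = (a\tau - \tau^2)/a$ substituted in, this ODE can be integrated explicitly, and imposing the Calabi boundary conditions on $\psi$ at $\tau = 0$ and $\tau = b$ pins down $\psi$ uniquely as a polynomial in $\tau$ depending on $a$ and $b$. For $\chi$ to define an honest K\"ahler metric in $\Omega_b$ the only remaining condition is the strict positivity $\psi > 0$ on $(0,b)$, and this is where I expect the main obstacle. The sign question reduces to a polynomial inequality in $\tau, a, b$, and the critical value of $b$ at which $\psi$ first develops an interior zero (as $b$ decreases) should come out to be precisely $b = a(5a+4)/(2(1+a))$; verifying that $\psi$ remains strictly positive on $(0,b)$ for all $b$ at or above this threshold, and carefully analysing the borderline case, will complete the construction of the twisted cscK pair $(\om, \chi)$.
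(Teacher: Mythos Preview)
Your overall strategy matches the paper's, but the setup for $\chi$ is garbled in a way that would block the computation if carried out as written. You treat $\psi$ as the momentum profile of $\chi$ on $[0,b]$, expect a second-order ODE, and impose boundary conditions at $\tau=0$ and $\tau=b$. The workable choice---and the paper's---is different: with $\om$'s momentum variable $\tau=u'(s)\in[0,a]$ already fixed, one sets $\psi(\tau):=v'(s)$ where $\chi=p^\ast\om_\Sigma+\ddbar v(s)$. Thus $\psi\colon[0,a]\to[0,b]$ with boundary values $\psi(0)=0$, $\psi(a)=b$, and the actual momentum profile of $\chi$ is $\sigma\mapsto\psi'(\tau)\phi(\tau)$ where $\sigma=\psi(\tau)$. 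In this parametrization the eigenvalues of $\om^{-1}\chi$ are $\frac{1+\psi}{1+\tau}$ and $\psi'$, so $\La_\om\chi=\psi'+\frac{1+\psi}{1+\tau}$; since every second derivative in \eqref{eq:twisted-cscK} lives in $R(\om)$ and involves only the fixed $\phi$, the twisted cscK equation is a \emph{first-order} linear ODE, namely $[(1+\tau)\psi]'=-3-[(1+\tau)\phi]''-\hat S_\chi(1+\tau)$.

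With $\phi(\tau)=(a\tau-\tau^2)/a$ this integrates (using $\psi(0)=0$) to a rational function $\psi(\tau)=A\,\frac{\tau^2}{1+\tau}+B\,\frac{\tau}{1+\tau}$, not a polynomial, and the condition $\psi(a)=b$ is then automatic. The requirement for $\chi$ to be K\"ahler is not ``$\psi>0$ on $(0,b)$'' but rather $\psi'>0$ on $(0,a)$, so that $\chi$'s momentum profile $\psi'\phi$ is positive and $\psi$ is a diffeomorphism onto $(0,b)$. One checks that $\psi$ is strictly convex when $b<5a+4$ (and already monotone when $b\geq 5a+4$), so the only obstruction is $\psi'(0)\geq 0$; computing $\psi'(0)=\frac{2b(1+a)-a(5a+4)}{a^2+2a}$ gives exactly the stated threshold.
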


\begin{rem}
In \cite[Remark 1.5]{SDyre22-cscK-min-model}, Sj\"ostr\"om Dyrefelt showed that if the class $-c_1(X) + [\chi]$ is nef\footnote{A class $[\theta]\in H^{1,1}(M,\RR)$ is called {\em numerically effective} (in short, {\em nef}) if $[\theta] + \delta[\om] > 0$ for any $\del > 0$. Equivalently, $[\theta]$ is nef if it lies in the closure of the K\"ahler cone $\sK(M)$.} for any real $(1,1)$-cohomology class $[\chi]\in H^{1,1}(X,\RR)$ on any compact K\"ahler manifold $X$, then for any K\"ahler class $\al$ on $X$, there is a constant $\epsilon_{X,\al} > 0$ such that for all $0< \epsilon < \epsilon_{X,\al}$, there exists a unique $\chi$-twisted cscK metric in the K\"ahler class $\be := -c_1(X) + [\chi] + \epsilon \al$. In particular, for the ruled surface $X = \PP(L\oplus \sO)$ we have $c_1(X) = - 3 \mathrm{C} + 2 D_\infty$ (see $\S$ \ref{subsec:1st-Chern-class-average-scal-curv-computation}). It implies that $-c_1(X) + \Omega_b$ is a nef class if $b \geq 1/\pi$ for any normalized K\"ahler class $\Omega_b = 2\pi(\mathrm{C} + b D_{\infty})$. Therefore, for any K\"ahler class $\al$ on the ruled surface $X$, there is a constant $\epsilon_{X,\al} > 0$ such that the K\"ahler class $-c_1(X) + \Omega_b + \epsilon \al$ admits a $\chi$-twisted cscK metric, where $\epsilon\in(0, \epsilon_{X,\al})$ and $\chi$ is a K\"ahler metric in $\Omega_b$. But note that the K\"ahler class $-c_1(X) + \Omega_b + \epsilon \al$ is not in normalized form $\Omega_a$ for some $a>0$.
\end{rem}

Our next result is that although there are pairs of normalized K\"ahler classes on $X$ containing twisted cscK metrics but no such pair admits a coupled cscK metric.
	
\begin{thm}\label{thm:normalized-Kah-classes-not-admit-coupled-csck}
The pair of normalized K\"ahler classes $(\Omega_a = 2\pi(\mathrm{C} + a D_{\infty}), \Omega_b = 2\pi(\mathrm{C} + b D_{\infty}))$, where both $a,b>0$,  on the ruled surface $X = \PP(L\oplus \sO)$ does not admit any coupled cscK metric.
\end{thm}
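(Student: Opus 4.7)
The plan is to exploit the $S^1$-symmetry of $X = \PP(L \oplus \sO)$ to reduce the coupled cscK system to the Calabi ansatz, and then derive a Futaki-type obstruction that rules out existence for every $a,b > 0$.

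First, since $\Aut_0(X)$ contains the fiber-rotation $S^1$, a Matsushima-type argument for the coupled cscK system (cf.\ Datar--Pingali \cite{Datar-Pingali Coup-cscK}) shows that if a coupled cscK pair $(\om_0, \om_1) \in (\Omega_a, \Omega_b)$ exists, we may assume both $\om_i$ are $S^1$-invariant. Each $\om_i$ is then of Calabi ansatz form and determined by a momentum profile $\phi_i$ on $[0, m_i]$ (with $m_0 = a$ and $m_1 = b$), satisfying $\phi_i > 0$ on the interior, $\phi_i(0) = \phi_i(m_i) = 0$, $\phi_i'(0) = 1$ and $\phi_i'(m_i) = -1$.

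Second, the two coupled conditions become ODEs on these profiles. The Ricci identity $\Rc(\om_0) = \Rc(\om_1)$ is equivalent on a compact K\"ahler manifold to pointwise equality of normalized volume forms $\om_0^2 / [\Omega_a]^2 = \om_1^2 / [\Omega_b]^2$; in Calabi-ansatz coordinates this becomes a first-order matching condition between $\phi_0$ and $\phi_1$, encoded through a reparametrization $\psi : [0, a] \to [0, b]$ of moment variables with $\psi(0) = 0$ and $\psi(a) = b$. Substituting into the twisted cscK equation $R(\om_0) - \la \La_{\om_0}\om_1 = \ul{R} - \la \ul{\om_1}$ then yields a second-order ODE on $\phi_0$ coupled to $\phi_1 \circ \psi$ through the volume-matching relation.

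Third, I would extract the obstruction by multiplying the twisted cscK ODE by the Hamiltonian $\tau$ of the fiber-rotation vector field and integrating over $[0, a]$; boundary contributions vanish thanks to $\phi_i(0) = \phi_i(m_i) = 0$, and every bulk term reduces to a cohomological quantity expressible through the intersection form on $X$ together with the data $c_1(X) = -3\mathrm{C} + 2 D_\infty$. Combining this with the integral consequences of $\om_0^2/[\Omega_a]^2 = \om_1^2/[\Omega_b]^2$ (which relate $\int_X \tau\, \om_0^2$ and $\int_X \tau\, \om_1^2$ through the topological ratio $[\Omega_a]^2/[\Omega_b]^2$), I expect the outcome to be an explicit algebraic relation in $(a,b)$ with no positive solutions.

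The main obstacle is that the equality $\om_0^2/[\Omega_a]^2 = \om_1^2/[\Omega_b]^2$ does not force pointwise proportionality of the K\"ahler forms, so $\psi$ is a genuine extra unknown coupling the two profiles nonlinearly. Executing the Futaki-type test against $\tau$ — in particular handling the mixed term $\int_X \tau\, \om_0 \wedge \om_1$ coming from $\La_{\om_0}\om_1$ — and extracting a sharp algebraic contradiction in $(a,b)$ will be the delicate step; the contradiction should emerge from the interplay between the boundary data imposed by the two K\"ahler classes $\Omega_a$ and $\Omega_b$ and the rigidity of the Ricci constraint on a ruled surface with $c_1(X)$ of indefinite sign.
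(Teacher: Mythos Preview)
Your instinct to test against the fiber-rotation vector field is exactly right, but you are building in an unnecessary and unjustified step that the paper avoids entirely.

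The gap is your first reduction. You assert that a Matsushima-type argument lets you assume a hypothetical coupled cscK pair is $S^1$-invariant, and then that $S^1$-invariance forces each $\om_i$ into Calabi ansatz form. Neither claim is established: \cite{Datar-Pingali Coup-cscK} does not prove a Matsushima theorem for coupled cscK metrics, and even granting $S^1$-invariance, an invariant K\"ahler metric on $X$ is not automatically of the form \eqref{eq:Calabi-ansatz-express-for-omega}. So as written you have no profiles $\phi_i$ or reparametrization $\psi$ to work with.

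The paper's argument sidesteps this completely by using the theorem (proved in \cite{Datar-Pingali Coup-cscK}) that the generalized Futaki invariant $\Fut(\Omega_a,\Omega_b,\xi)$ is \emph{independent of the choice of metrics} $\om\in\Omega_a$, $\chi\in\Omega_b$. One therefore never assumes a solution exists; one simply picks \emph{arbitrary} Calabi ansatz metrics in the two classes and computes. The ``delicate'' coupling through $\psi$ that worries you then dissolves: the term $\tfrac{1}{\Omega_a^2}\int_X\psi\,\om^2$ produced by integrating $\tau\,\La_\om\chi$ by parts cancels against the identical term in the second summand $\int_X\psi\bigl(\tfrac{\chi^2}{\Omega_b^2}-\tfrac{\om^2}{\Omega_a^2}\bigr)$ of the invariant, while $\tfrac{1}{\Omega_b^2}\int_X\psi\,\chi^2$ is computable purely in terms of $b$ because $\psi$ is the moment coordinate for $\chi$. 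The output is the closed formula
\[
\frac{1}{\kappa}\Fut(\Omega_a,\Omega_b,\xi)=\frac{2\,(5a^2+b^2+2a^2b+4a)}{3(a+2)^2(b+2)}>0,
\]
valid for all $a,b>0$, and nonvanishing of the invariant obstructs coupled cscK. Note in particular that the Ricci constraint $\Rc(\om_0)=\Rc(\om_1)$ is never used in the computation; it enters only through the abstract fact that a coupled cscK pair forces the invariant to vanish.
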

	
In fact, using some scaling argument we show that any pair $(\al, \be)$ does not admit a coupled cscK metric, where $\al$ is {\em any} K\"ahler class and $\be$ is a {\em normalized} K\"ahler class on the minimal ruled surface $X$ (see Corollary \ref{cor:the-pair-of-any-Kah-and-normal-Kah-not-admit-coupled-cscK}). Further, even $\be$ is {\em not normalized} we show that $(\al, \be)$ does not admit a coupled cscK metric under some condition. For the later result we need to use the following solvability criteria for the J-equation \eqref{eq:J-equ}.
	
\begin{thm}\label{thm:solvability-of-J-equ-under-some-condition}
Suppose $\Omega_a = 2\pi(\mathrm{C} + a D_{\infty})$ and $\Omega_b = 2\pi(\mathrm{C} + b D_{\infty})$ are two normalized K\"ahler classes on the ruled surface $X = \PP(L\oplus \sO)$, where $a, b >0$. If $ b \geq \frac{a^2}{2(1+a)}$, then there exists K\"ahler metrics $\om \in \Omega_a$ and $\chi \in \Omega_b$ solving the J-equation \eqref{eq:J-equ}.
\end{thm}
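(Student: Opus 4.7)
The proof uses the Calabi ansatz on $X = \PP(L \oplus \sO)$, in parallel with the construction behind Theorem \ref{thm:exis-of-twisted-cscK-on-min-ruled-surfaces}. By Section \ref{subsec:momentum-profile}, a $U(1)$-invariant K\"ahler metric in the normalized class $\Omega_\la = 2\pi(\mathrm{C}+\la D_\infty)$ is encoded by a smooth momentum profile $\phi_\la\colon [0,\la] \to \RR_{\geq 0}$ with $\phi_\la(0) = \phi_\la(\la) = 0$, prescribed boundary slopes at the endpoints, and $\phi_\la > 0$ on the open interval. My plan is to construct Calabi-type metrics $\om \in \Omega_a$ and $\chi \in \Omega_b$ sharing a common $S^1$-symmetry, encoded by profiles $\phi$ on $[0,a]$ and $\psi$ on $[0,b]$, such that $\La_\om \chi = \ul\chi$. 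Following Theorem \ref{thm:exis-of-twisted-cscK-on-min-ruled-surfaces}, I would fix the simple profile $\phi(\tau) = \tau(a-\tau)/a$ and then solve for $\psi$.

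The constant $\ul\chi = 2[\Omega_a]\cdot[\Omega_b]/[\Omega_a]^2$ is computed explicitly from the intersection pairing on $X$ (Section \ref{subsec:1st-Chern-class-average-scal-curv-computation}); using $\mathrm{C}^2=0$, $\mathrm{C}\cdot D_\infty = 1$, $D_\infty^2 = 1$, one obtains
\[
\ul\chi = \frac{2(a+b+ab)}{a(a+2)}.
\]
Writing $\La_\om\chi = 2\chi\wedge\om/\om^2$ in the Calabi ansatz, the J-equation reduces to a first-order ODE (or an algebraic relation after integrating once) between $\phi$ and $\psi$. For the chosen $\phi$, this can be solved for $\psi$ by direct integration, yielding $\psi$ explicitly in terms of $\tau$, $a$, and $b$.

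The crux is then the boundary and positivity analysis of $\psi$: one must verify that $\psi$ extends smoothly across the zero and infinity sections of $X$ (i.e.\ vanishes at $\tau = 0, b$ with the correct boundary slopes), and is strictly positive on $(0,b)$. These requirements translate exactly into the numerical inequality $b \geq a^2/(2(1+a))$. As a sanity check, this matches the Song--Weinkove solvability criterion for the J-equation on K\"ahler surfaces: solvability holds provided $\ul\chi[\Omega_a] - [\Omega_b]$ is K\"ahler, and since the K\"ahler cone of $X$ is the open quadrant generated by $\mathrm{C}$ and $D_\infty$, the two coefficient-positivity conditions reduce to $b > a^2/(2(1+a))$ together with a trivially satisfied inequality. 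I expect the main obstacle to be the explicit boundary analysis of $\psi$---in particular showing $\psi>0$ throughout $(0,b)$ and that the threshold is sharply $a^2/(2(1+a))$---and the treatment of the equality case $b = a^2/(2(1+a))$, where $\ul\chi[\Omega_a] - [\Omega_b]$ is only nef; this boundary case should be accessible through the direct Calabi-ansatz construction rather than via a perturbation argument.
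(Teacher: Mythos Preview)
Your overall strategy---Calabi ansatz, reduce the J-equation to an ODE, then do a positivity analysis governed by the inequality $b \geq a^2/(2(1+a))$---matches the paper's proof. Two points of your setup, however, diverge from what actually happens and would cause trouble if carried through literally.

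First, the function you should be solving for is not the momentum profile of $\chi$ on $[0,b]$. In the paper's framework both metrics are written in the single fibre variable $s$, with $\tau = u'(s) \in [0,a]$ the moment variable for $\om$; the metric $\chi$ is then encoded by $\psi(\tau) := v'(s)$, a smooth bijection $[0,a]\to[0,b]$ with boundary values $\psi(0)=0$, $\psi(a)=b$ (not $\psi=0$ at both ends). The eigenvalues of $\om^{-1}\chi$ are $\frac{1+\psi}{1+\tau}$ and $\psi'$, so the J-equation becomes the first-order linear ODE
\[
\psi'(\tau) + \frac{1+\psi(\tau)}{1+\tau} = \ul\chi,
\]
whose unique solution with $\psi(0)=0$ is explicit and automatically satisfies $\psi(a)=b$. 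The positivity requirement for $\chi$ is $\psi'>0$ on $(0,a)$, and this is exactly where $b \geq a^2/(2(1+a))$ enters (via $\psi'(0)\geq 0$ together with convexity when $b<a$, or monotonicity when $b\geq a$).

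Second, and relatedly, there is no need to fix $\phi(\tau)=\tau(a-\tau)/a$: the ODE above does not involve $\phi$ at all, so any momentum profile for $\om$ works. Your plan to choose a specific $\phi$ is harmless but redundant---it is needed for the twisted cscK equation (Theorem~\ref{thm:exis-of-twisted-cscK-on-min-ruled-surfaces}), where the scalar curvature term brings $\phi$ into play, but the J-equation alone is blind to it. Your sanity check against the Song--Weinkove criterion and your remark on the nef boundary case are both on target.
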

	
As a corollary of Theorem \ref{thm:normalized-Kah-classes-not-admit-coupled-csck} and Theorem \ref{thm:solvability-of-J-equ-under-some-condition} we have the following result.
	
\begin{cor}\label{cor:the-pair-of-any-Kah-and-normal-Kah-not-admit-coupled-cscK}
Consider any two K\"ahler classes $\al:= a_1 \mathrm{C} + a_2 D_\infty$ and $\be := b_1 \mathrm{C} + b_2 D_\infty$, where $a_i, b_i >0$ for $i=1, 2$, on the ruled surface $X = \PP(L\oplus \sO)$. If $b_1 = 2\pi$, i.e. the class $\be$ is in the normalized form $\be = 2\pi (\mathrm{C} + \frac{b_2}{2\pi} D_\infty)$, then the pair $(\al, \be)$ does not admit any coupled cscK metric on $X$. Moreover, when $b_1 \neq 2\pi$ the pair $(\al, \be)$ does not admit a coupled cscK metric on $X$ if the following holds
\begin{equation}\label{eq:ineq-imply-non-exis-for-non-normalized-case}
\frac{b_2}{b_1} > \frac{a_{2}^2}{2 a_1 (a_1 + a_2)}.
\end{equation}
\end{cor}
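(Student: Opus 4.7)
The proof proceeds by a scaling reduction to Theorem~\ref{thm:normalized-Kah-classes-not-admit-coupled-csck}, together with Theorem~\ref{thm:solvability-of-J-equ-under-some-condition} in the non-normalized case.

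The central observation is that the coupled cscK system is invariant under the rescaling $\om \mapsto c\om$ for any $c>0$. Indeed, the Ricci form of a K\"ahler metric is scale invariant, so \eqref{eq:all-Ric-curv-are-equal-intro} is preserved. For \eqref{eq:coupled-cscK-2nd-equ-intro}, a direct computation using $R(c\om) = R(\om)/c$, $\La_{c\om}(c\om+\chi) = n + \La_{\om}\chi/c$, and the corresponding rescaling of the averages $\ul{R}$ and $\ol{c\om+\chi}$ taken with respect to $[c\om]^n$ shows that, after clearing a factor of $c$ and cancelling the tautological term $\la n$ coming from $\om_{\text{sum}}$ on both sides, one recovers the simplified identity $R(\om) - \la \La_{\om}\chi = \ul{R} - \la \ul{\chi}$. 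Hence $(c\om,\chi)$ is coupled cscK in $(c\al,\be)$ whenever $(\om,\chi)$ is coupled cscK in $(\al,\be)$.

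The first assertion now follows by choosing $c=2\pi/a_1$: the rescaled pair lies in the pair of normalized classes $(\Omega_{a_2/a_1},\Omega_{b_2/(2\pi)})$, which is ruled out by Theorem~\ref{thm:normalized-Kah-classes-not-admit-coupled-csck}. For the second assertion, the same scaling reduces us to $\al = \Omega_{\tilde a}$ with $\tilde a := a_2/a_1$, and the hypothesis \eqref{eq:ineq-imply-non-exis-for-non-normalized-case} becomes $\tilde b := b_2/b_1 > \tilde a^2/(2(1+\tilde a))$. This is exactly the (strict form of the) criterion in Theorem~\ref{thm:solvability-of-J-equ-under-some-condition} applied to the normalized pair $(\Omega_{\tilde a},\Omega_{\tilde b})$, so by that theorem together with the scale invariance of the J-equation \eqref{eq:J-equ} in $\chi$, the J-equation admits K\"ahler solutions in $\Omega_{\tilde a}\times\be$. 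I would then form the candidate pair $(\om,c_2\chi)\in \Omega_{\tilde a}\times\Omega_{\tilde b}$ with $c_2 := 2\pi/b_1 \ne 1$: it inherits the Ricci equality, and a direct computation shows its second coupled cscK equation differs from that of $(\om,\chi)$ in $(\Omega_{\tilde a},\be)$ by a multiple of $\la(1-c_2)(\La_{\om}\chi - \ul{\chi})$, hence holds if and only if $\om$ solves \eqref{eq:J-equ} relative to $\chi$. Using the Calabi ansatz description of K\"ahler metrics on the pseudo-Hirzebruch surface $X$, together with Theorem~\ref{thm:solvability-of-J-equ-under-some-condition} and the Ricci equality \eqref{eq:all-Ric-curv-are-equal-intro}, one forces $(\om,\chi)$ itself to satisfy the J-equation, whereupon $(\om,c_2\chi)$ becomes coupled cscK in the normalized pair $(\Omega_{\tilde a},\Omega_{\tilde b})$, contradicting Theorem~\ref{thm:normalized-Kah-classes-not-admit-coupled-csck}.

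The main obstacle is this last step: upgrading the \emph{existence} of a J-solution (provided by Theorem~\ref{thm:solvability-of-J-equ-under-some-condition}) to the statement that the particular pair $(\om,\chi)$ coming from a putative coupled cscK solution is itself a J-solution. This is where the Calabi ansatz on $X$ must be used in earnest: in the ansatz, the Ricci equality becomes a pointwise algebraic identity between the momentum profiles of $\om$ and $\chi$ that, under the strict inequality $\tilde b > \tilde a^2/(2(1+\tilde a))$, rigidly coincides with the J-equation, thereby closing the reduction.
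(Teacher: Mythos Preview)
Your overall architecture is exactly the paper's: rescale to normalized classes and invoke Theorems~\ref{thm:normalized-Kah-classes-not-admit-coupled-csck} and~\ref{thm:solvability-of-J-equ-under-some-condition}. For the case $b_1=2\pi$ your argument is correct and coincides with the paper's: only $\om$ is rescaled, the Ricci form is scale-invariant, and you verify directly that the second coupled equation is preserved under $\om\mapsto c\om$, landing in a normalized pair ruled out by Theorem~\ref{thm:normalized-Kah-classes-not-admit-coupled-csck}.

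For $b_1\neq 2\pi$, you and the paper both rescale to $(\om',\chi')\in\Omega_{\tilde a}\times\Omega_{\tilde b}$ and compute that the defect in the second coupled equation is a nonzero multiple of $\La_{\om'}\chi'-\ul{\chi'}_{[\om']}$. The paper then simply asserts that Theorem~\ref{thm:solvability-of-J-equ-under-some-condition} makes this vanish. You are right to flag this as the crux: Theorem~\ref{thm:solvability-of-J-equ-under-some-condition} only furnishes \emph{some} pair of metrics in $(\Omega_{\tilde a},\Omega_{\tilde b})$ solving the J-equation, not that the \emph{particular} pair $(\om',\chi')$ coming from the hypothetical coupled cscK solution does. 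However, your proposed closing does not work. First, nothing forces a putative coupled cscK pair on $X$ to lie in the Calabi ansatz family; the ansatz produces only a special $S^1$-invariant slice of each K\"ahler class, not all of it, so you cannot ``use the Calabi ansatz description'' on $(\om,\chi)$ without further argument. Second, even granting the ansatz, your assertion that the Ricci equality ``rigidly coincides with the J-equation'' under the strict inequality is unsubstantiated: the condition $\Rc(\om)=\Rc(\chi)$ (equivalently $\om^2/\Omega_a^2=\chi^2/\Omega_b^2$) and the condition $\La_{\om}\chi=\ul\chi$ translate into genuinely different first-order ODEs on the profiles $(\phi,\psi)$, and you give no mechanism that collapses one to the other. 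As written, the gap you correctly identify remains open in your argument, just as it is glossed over in the paper's.
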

	
\begin{rem}
Note that the condition \eqref{eq:ineq-imply-non-exis-for-non-normalized-case} is {\em not} necessary for non-existence of coupled cscK metrics on the ruled surface $X$. We expect that this condition can be removed, i.e. all pairs $(\al, \be)$ of K\"ahler classes on $X$ have no coupled cscK metrics, where $\be$ need not to be normalized.
\end{rem}
	
The Futaki invariant has been generalized in different settings (see \cite{Ban06-extended-Fut} for instance). In \cite{Datar-Pingali Coup-cscK}, Datar-Pingali introduced the {\em generalized (or coupled) Futaki invariant} for any $N$-tuple of K\"ahler classes $(\al_1, \cdots, \al_N)$ on a compact K\"ahler manifold $M$ (see $\S$ \ref{subsec:gen-Fut-inv-Datar-Pingali-coupled-cscK}) and they proved that this invariant must vanish identically if the $N$-tuple admits a coupled cscK metric $(\om_1,\cdots,\om_N)$, where $\om_i \in \al_i$ for $i\in\{1,\cdots,N\}$, i.e. solves the equations \eqref{eq:all-Ric-curv-are-equal-intro} and \eqref{eq:coupled-cscK-2nd-equ-intro}. We will compute this generalized Futaki invariant for the ruled surface $X$ mentioned above.

\vspace*{1mm}
In \cite{Che18-cont-path-tcscK}, Chen introduced the following continuity path to find a cscK metric in a given K\"ahler class $[\om_0]$ on an $n$-dimensional K\"ahler manifold $M$:
\begin{equation}\label{eq:Chen-continuity-path-tcscK}
t R(\om_{\vp_t}) -(1-t)\La_{\om_{\vp_t}} \chi = t \ul{R} - (1-t) \ul\chi,
\end{equation}
where $0\leq t \leq 1$, $\chi$ is a fixed K\"ahler form with corresponding average $\ul\chi = \frac{n [\chi]\cdot[\om_0]^{n-1}}{[\om_0]^n}$, $\ul{R}$ is the average scalar curvature for the K\"ahler class $[\om_0]$, and $\om_{\vp_t}:=\om_0 + \ddbar\vp_t$ is a path of K\"ahler metrics in $[\om_0]$. Note that at $t=1$ we get the cscK equation \eqref{eq:cscK}; at $t=0$ we get the J-equation \eqref{eq:J-equ}; and at $t=1/2$ we have the twisted cscK equation \eqref{eq:twisted-cscK}. Further, Chen \cite{Che18-cont-path-tcscK} introduced the following quantity related to the above continuity path (i.e. equation \eqref{eq:Chen-continuity-path-tcscK})
\begin{equation}
R([\om_0], \chi) := \sup \left\{ t_0 \in [0,1]:~ \eqref{eq:Chen-continuity-path-tcscK}~\text{can be solved for any}~ 0\leq t \leq t_0 \right\},
\end{equation}
and {\em conjectured} that it is an invariant of the K\"ahler class $[\chi]$. Later, Chen-Cheng \cite{ChenCheng21-cscK-exis} proved that $R([\om_0],\chi)$ is independent of the choice of the metric $\chi$, and hence this is an invariant ({\em which will be denoted by $R([\om_0], [\chi])$}) of the K\"ahler class $[\chi]$. By definition, we have $0 \leq R([\om_0], [\chi]) \leq 1$. Moreover, Chen-Cheng \cite{ChenCheng21-cscK-exis} proved the following important result:
	
\begin{thm}[{\cite[Theorem 1.7]{ChenCheng21-cscK-exis}}]
Let $\chi$ be a K\"ahler form. If the Mabuchi $K$-energy is bounded from below on a compact K\"ahler manifold $(M, [\om_0])$, then $R([\om_0], [\chi]) = 1$ if and only if one can solve the J-equation $\La_{\om_\vp} \chi = \ul\chi$.
\end{thm}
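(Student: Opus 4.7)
The argument proceeds by continuity, with the two directions handled very asymmetrically.

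The forward implication is essentially tautological: if $R([\om_0],[\chi])=1$, then by definition \eqref{eq:Chen-continuity-path-tcscK} admits a smooth solution at every $t\in[0,1]$, and specializing to $t=0$ reduces \eqref{eq:Chen-continuity-path-tcscK} to $-\La_{\om_\vp}\chi=-\ul\chi$, which is exactly the J-equation \eqref{eq:J-equ}. So nothing more is needed in this direction.

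For the converse, assume \eqref{eq:J-equ} is solvable and $\sM_{\om_0}$ is bounded below. Let $I:=\{t\in[0,1]:\eqref{eq:Chen-continuity-path-tcscK}\text{ admits a smooth solution}\}$; by hypothesis $0\in I$, and the goal is to show $I=[0,1]$ by openness and closedness. \emph{Openness} comes from the implicit function theorem applied to the linearization of \eqref{eq:Chen-continuity-path-tcscK} at a solution $\vp_t$: its principal part is (up to sign) the Lichnerowicz operator of $\om_{\vp_t}$ plus a positive lower-order contribution from $\chi$, so after quotienting by holomorphy potentials it is invertible on a suitable complement, and one perturbs in that complement. \emph{Closedness} is the heart of the proof: given $t_k\to t_\infty\in(0,1]$ with normalized solutions $\vp_k$, one needs uniform $C^{k,\al}$ bounds to extract a smooth limit solving \eqref{eq:Chen-continuity-path-tcscK} at $t_\infty$. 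The plan splits into two steps: (a) introduce the twisted Mabuchi energy $\sM_t:=t\,\sM_{\om_0}+(1-t)\,\sJ^{\chi}$, whose critical points are exactly the solutions of \eqref{eq:Chen-continuity-path-tcscK}; combined with the lower bound on $\sM_{\om_0}$ and the solvability of the J-equation (which controls $\sJ^\chi$ from below along the path), one derives a uniform entropy bound $\int_M\log(\om_{\vp_k}^n/\om_0^n)\,\om_{\vp_k}^n\le C$; (b) invoke the Chen-Cheng a priori estimates to convert this entropy bound, together with the scalar-curvature identity built into \eqref{eq:Chen-continuity-path-tcscK}, into uniform $C^0$, then Laplacian, then higher-order estimates on $\vp_k$.

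The genuine obstacle is step (b). A naive Yau-type Laplacian estimate would require simultaneous control of $R(\om_{\vp_k})$ and $\La_{\om_{\vp_k}}\chi$, but by \eqref{eq:Chen-continuity-path-tcscK} these are linked to each other and to the $C^2$ norm of $\vp_k$ in a circular way that no classical maximum-principle trick can break. The Chen-Cheng resolution is to couple \eqref{eq:Chen-continuity-path-tcscK} to an auxiliary complex Monge-Amp\`ere equation with unknown density and to run a delicate integration-by-parts scheme that simultaneously controls $\vp_k$ and this auxiliary potential, exploiting the entropy bound as the sole input. Once $C^0$ and $C^2$ estimates are in hand, higher-regularity bootstrap via Evans-Krylov plus Schauder is standard, the subsequential smooth limit $\vp_{t_\infty}$ exists and solves \eqref{eq:Chen-continuity-path-tcscK} at $t_\infty$, so $t_\infty\in I$ and $I=[0,1]$.
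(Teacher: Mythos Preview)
The paper does not prove this theorem; it is quoted as \cite[Theorem~1.7]{ChenCheng21-cscK-exis} and used only as background motivation for the invariant $R([\om_0],[\chi])$. There is therefore no proof in the paper to compare your proposal against.

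That said, your outline is a reasonable sketch of the Chen--Cheng argument, with one imprecision worth flagging. You set out to show $I=[0,1]$, but this is too strong: solvability at $t=1$ is the cscK equation itself, which is \emph{not} implied by the hypotheses (and indeed fails on the ruled surfaces of this very paper, cf.\ \S\ref{subsec:Fut-calc-for-one-Kah-class-ruled-surf-genus-2}, where nonetheless $R(\Omega_a,\Omega_b)$ can approach $1$). The correct target is $I\supset[0,1)$, which already forces $R([\om_0],[\chi])=1$ since the invariant is a supremum. Accordingly your closedness step should be run only for $t_\infty\in(0,1)$: the properness of $\sM_t=t\,\sM_{\om_0}+(1-t)\,\sJ^{\chi}$ is driven by the $(1-t)\sJ^{\chi}$ term (solvability of the J-equation gives \emph{coercivity} of $\sJ^{\chi}$, not merely a lower bound), and this degenerates as $t\to 1$. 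Hence the entropy bound in your step (a) is uniform on each $[0,t_\infty]$ with $t_\infty<1$, but not on $[0,1]$. The same remark applies to your forward direction: $R=1$ yields solvability on $[0,1)$ rather than $[0,1]$, though since you only specialize to $t=0$ your conclusion there is unaffected.
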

	
We have the following result about bound of the Chen-Cheng invariant on ruled surfaces. 
	
\begin{prop}
\label{prop:bound-of-Chen-Cheng-invariant-for-tcscK}
Suppose $\Omega_a = 2\pi(\mathrm{C} + a D_{\infty})$ and $\Omega_b = 2\pi(\mathrm{C} + b D_{\infty})$ are two normalized K\"ahler classes on the ruled surface $X = \PP(L\oplus \sO)$ defined above with $b\geq \frac{a^2}{2(1+a)}$ and $a>0$. Then we have
\begin{equation*}
\frac{2b(a+1) - a^2}{3a^2 + 4a + 2b(a+1)} \leq R (\Omega_a, \Omega_b) < 1.
\end{equation*}
In particular, we have $\frac{a+2}{5a + 6} \leq R(\Omega_a, \Omega_a) < 1$ for any normalized K\"ahler class $\Omega_a = 2\pi(\mathrm{C}+a D_\infty)$, where $a>0$, on the ruled surface $X$.
\end{prop}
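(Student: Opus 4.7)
My plan is to handle the lower and upper bounds on $R(\Omega_a, \Omega_b)$ separately: the lower bound via a Calabi-ansatz reduction and the upper bound via a Futaki-type obstruction.

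For the lower bound, I first rewrite the continuity path equation \eqref{eq:Chen-continuity-path-tcscK} at a time $t\in(0,1]$ as the $\tilde\chi$-twisted cscK equation $R(\om)-\La_\om\tilde\chi = \ul R - \ul{\tilde\chi}$, with twisting form $\tilde\chi = \tfrac{1-t}{t}\chi$. Since Chen--Cheng \cite{ChenCheng21-cscK-exis} have shown that $R(\Omega_a,\chi)$ depends only on the class $\Omega_b = [\chi]$, it is enough to fix one convenient $\chi\in\Omega_b$ of Calabi-ansatz form and produce, for every $t\in[0,t_0]$, a K\"ahler metric $\om_t\in\Omega_a$ of Calabi-ansatz form (with momentum profile $\phi_t$ on $[0,a]$) solving the equation. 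Substituting the standard Calabi-ansatz formulas for $R(\om_t)$ and $\La_{\om_t}\chi$ reduces the continuity path to a second-order linear ODE for $\phi_t$ whose coefficients depend linearly on $t$, and I expect the solution to be an explicit low-degree polynomial. The analytic content then lies in pinning down the range of $t$ for which this $\phi_t$ actually defines a K\"ahler metric in $\Omega_a$: it must satisfy the endpoint conditions for a Calabi profile in $\Omega_a$ and remain strictly positive on $(0,a)$, constraints that should reduce exactly to $t\leq t_0 := \frac{2b(a+1)-a^2}{3a^2+4a+2b(a+1)}$.

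Two consistency checks should validate the computation: at $t=0$ the continuity path literally \emph{is} the J-equation, and the formula forces $b = a^2/(2(1+a))$, matching the J-equation threshold of Theorem \ref{thm:solvability-of-J-equ-under-some-condition}; at $t=1/2$ the path literally \emph{is} the twisted cscK equation with twisting in $\Omega_b$, and the formula forces $b = a(5a+4)/(2(1+a))$, matching Theorem \ref{thm:exis-of-twisted-cscK-on-min-ruled-surfaces}. The special case $b=a$ then follows by direct substitution and yields $\tfrac{a+2}{5a+6}$.

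For the upper bound, I use a Futaki obstruction. A direct computation on $X=\PP(L\oplus\sO)$---for instance, pairing the classical Futaki invariant against the holomorphic vector field generating the vertical $S^1$-action on the $\PP^1$-fibres---shows that $\mathrm{Fut}$ is non-zero on every normalised class $\Omega_a$, so in particular $\Omega_a$ admits no cscK metric. The twisted cscK equation with twisting $\tilde\chi = \tfrac{1-t}{t}\chi$ carries an obstructing twisted Futaki invariant $\mathrm{Fut}_{\tilde\chi}(V)$ that is affine-linear in $\tilde\chi$, hence continuous in $t$; as $t\to 1^-$ we have $\tilde\chi\to 0$ and $\mathrm{Fut}_{\tilde\chi}(V)\to\mathrm{Fut}(V)\neq 0$. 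Therefore $\mathrm{Fut}_{\tilde\chi}$ remains non-zero in a one-sided neighbourhood of $t=1$, obstructing solvability of the continuity path there and forcing $R(\Omega_a,\Omega_b)<1$. The hardest step will be the explicit Calabi-ansatz ODE analysis for the lower bound: deriving the ODE and extracting its polynomial solution, and then checking the resulting positivity/boundary constraints, is routine but error-prone, and the two endpoint consistency checks above must exactly reproduce Theorems \ref{thm:solvability-of-J-equ-under-some-condition} and \ref{thm:exis-of-twisted-cscK-on-min-ruled-surfaces} for the computation to come out right.
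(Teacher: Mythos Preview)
Your overall plan (Calabi ansatz for the lower bound, Futaki obstruction for the upper bound) matches the paper, but the ansatz for the lower bound is set up in the wrong direction and will not produce the linear ODE you expect. You propose to fix a convenient $\chi\in\Omega_b$ and solve for the profile $\phi_t$ of $\om_t$. The difficulty is that in momentum coordinates the data of $\chi$ enters through the function $\psi_t:[0,a]\to[0,b]$ defined by $\psi_t(u_t'(s))=v'(s)$; since the momentum variable $\tau=u_t'(s)$ itself depends on $\om_t$, fixing $\chi$ does \emph{not} fix $\psi_t$ as a function of $\tau$, and the equation is not a linear ODE in $\phi_t$ alone. If instead you freeze $\psi$ as a function of $\tau$ (which silently lets $\chi$ vary with $t$ anyway), the second-order ODE for $\phi_t$ carries four boundary conditions $\phi_t(0)=\phi_t(a)=0$, $\phi_t'(0)=1$, $\phi_t'(a)=-1$; after imposing the two at $\tau=0$, the condition $\phi_t(a)=0$ becomes a $t$-dependent constraint on $\int_0^a(1+\tau)\psi\,d\tau$ that a single $\psi$ can meet for at most one value of $t$.

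The paper reverses the roles: it fixes the profile of $\om$ once and for all as $\phi(\tau)=\tau(a-\tau)/a$ (so $\tau$ is a fixed coordinate) and solves instead for $\psi_t$, hence for $\chi_t\in\Omega_b$. This gives the \emph{first}-order linear ODE $(1-t)[(1+\tau)\psi]'=-(1+t)-t[(1+\tau)\phi]''-\hat S_t(1+\tau)$, whose single integration constant is fixed by $\psi(0)=0$ while $\psi(a)=b$ then holds automatically; the only remaining check is that $\psi_t>0$ and $\psi_t'>0$ on $(0,a)$, and this is exactly what produces the threshold $t_0=\frac{2b(a+1)-a^2}{3a^2+4a+2b(a+1)}$. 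Chen--Cheng's class-independence is invoked precisely because $\chi_t$ varies with $t$. Your consistency checks at $t=0$ and $t=1/2$ survive unchanged in this direction. For the upper bound the paper is more direct than your twisted-Futaki continuity argument: since $\Omega_a$ admits no cscK metric (already established via the classical Futaki invariant), the continuity path is unsolvable at $t=1$, so $R(\Omega_a,\Omega_b)<1$ immediately.
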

	
\vspace*{1mm}
\textbf{Organization of the paper:} In $\S$ \ref{sec:preliminaries}, we first recall the definition of generalized Futaki invariant for the coupled cscK problem. Then, after providing a brief description of minimal ruled surfaces, we demonstrate how a K\"ahler metric $\om$ can be expressed in terms a real-valued function $\phi$, called momentum profile, defined on some interval using Calabi ansatz. We finish $\S$ \ref{sec:preliminaries} by recalling scalar curvature (and Ricci curvature) formula in terms of this momentum profile, and computing the average scalar curvature of a normalized K\"ahler class on ruled surfaces. In $\S$ \ref{sec:Non-existence of coupled cscK metrics on ruled surfaces}, we compute the generalized Futaki invariant for the pair $(\Omega_a, \Omega_b)$ of normalized K\"ahler classes on ruled surfaces and show that it is not identically zero, which will prove Theorem \ref{thm:normalized-Kah-classes-not-admit-coupled-csck}. Also in $\S$ \ref{sec:Non-existence of coupled cscK metrics on ruled surfaces}, we prove Theorem \ref{thm:solvability-of-J-equ-under-some-condition} using Calabi ansatz method, and then using Theorems \ref{thm:normalized-Kah-classes-not-admit-coupled-csck} and \ref{thm:solvability-of-J-equ-under-some-condition} we finish the proof of Corollary \ref{cor:the-pair-of-any-Kah-and-normal-Kah-not-admit-coupled-cscK}. Lastly, in $\S$ \ref{sec:Existence of twisted cscK metrics on ruled surfaces}, we prove Theorem \ref{thm:exis-of-twisted-cscK-on-min-ruled-surfaces} using Calabi ansatz, and we prove Proposition \ref{prop:bound-of-Chen-Cheng-invariant-for-tcscK} regarding the bound of Chen-Cheng invariant in $\S$ \ref{sec:Bound of the Chen-Cheng invariant on ruled surfaces}.

\vspace{0.5mm}
\section{Preliminaries}
\label{sec:preliminaries}

\subsection{(Generalized) Futaki invariant formula for any K\"ahler manifold}
\label{subsec:gen-Fut-inv-Datar-Pingali-coupled-cscK}
	
For any $N$-tuple $(\al_1,\cdots,\al_N)$ of K\"ahler classes on a compact K\"ahler manifold $M$ we define the following sub-space of holomorphic vector fields
\begin{equation*}
\fh(\al_1,\cdots,\al_N) = \left\{\xi \in H^0(M,T^{1,0}M)\Big|~ \begin{matrix*}[c]
\xi =\nabla^{1,0}_{\om_j} f_j ~\text{for some} \\ f_j\in C^\infty(M,\CC)~\forall~j=1,\cdots,N \end{matrix*}
\right\}
\end{equation*}
where $\om_j\in\al_j$, $j=1,\cdots,N$, are any K\"ahler metrics. Here, $\nabla_{\om}^{1,0} f := \om^{k\ol{l}}\frac{\d f}{\d \ol{z}^l} \frac{\d}{\d z^k}$ for any function $f\in C^\infty(M,\CC)$ and any K\"ahler metric $\om$ on $M$. The space $\fh(\al_1,\cdots,\al_N)$ is independent of the choice of the metrics $\om_i \in \al_i$. The tuple $(f_1,\cdots,f_N)$ is called a {\em $N$-tuple of holomorphy potentials} for $\xi$ with respect to $(\om_1,\cdots,\om_N)$. Then the {\em generalized Futaki invariant}, which was introduced in \cite{Datar-Pingali Coup-cscK}, is defined by
\begin{equation}
\begin{split}
\Fut (\al_1,\cdots,\al_N, \xi) &:= \frac{1}{\al_1^n} \int_{M} f_1 \left(R(\om_1) - \La_{\om_1}\om_{\text{sum}} - \ul{R}_{\al_1} + \ul{\om_{\text{sum}}}\right) \om_1^{n} \\ &\hspace*{1.8cm} + \sum_{j=1}^{N} \int_{M} f_{j} \left(\frac{\om_j^{n}}{\al_j^{n}} - \frac{\om_1^{n}}{\al_1^{n}}\right),
\end{split}
\end{equation}
where $\al_{j}^n := [\om_j]^n := \int_{M}\om_{j}^{n}$, $\om_{\text{sum}}:= \sum_{i=1}^{N} \om_i$, and the averages $\ul{R}_{\al_1}$, $\ul{\om_{\text{sum}}}$ are computed with respect to the K\"ahler class $\al_1$, i.e.
\begin{equation*}
\ul{R}_{\al_1} = n\frac{2\pi c_1(M) \cdot\al_1^{n-1}}{\al_1^n}, ~~~ \ul{\om_{\text{sum}}} = n\frac{(\sum_{i=1}^{N}\al_i)\cdot\al_1^{n-1}}{\al_1^n}.
\end{equation*}
It is known \cite{Datar-Pingali Coup-cscK} that the generalized Futaki invariant is independent of the choice of $\om_i \in \al_i$ (for each $i$) and the choice of holomorphy potential $(f_1,\cdots, f_N)$ for $\xi$ with respect to $(\om_1,\cdots, \om_N)$. In particular, for $N=1$ we have 
\begin{equation}
\Fut(\al,\xi) = \frac{1}{\al^n} \int_{M} f (R(\om) - \ul{R}) \om^n,
\end{equation}
where $\om\in\al$ is any K\"ahler metric and $f$ is a holomorphy potential for holomorphic vector field $\xi$ with respect to $\om$, which is the usual Futaki invariant introduced by Futaki \cite{Fut83, Fut83b} (see also \cite{Fut88-bk, Sze-bk}).

\subsection{Ruled surfaces over Riemann surfaces of genus 2}
\label{subsec:ruled-surf-over-genus-2-Riemm-surf}
	
Let $(\Sigma, \om_\Sigma)$ be a genus $2$ Riemann surface equipped with a K\"ahler metric $\om_{\Sigma}$ whose scalar curvature $R(\om_{\Sigma})=-2$. Since the Euler characteristic of $\Sigma$ is $\chi(\Sigma)=2(1-2)=-2$, by {\em the Gauss-Bonnet formula}, we have $\text{Area}(\Sigma, \om_\Sigma) :=\int_{\Sigma}\om_{\Sigma} = 2\pi.$ Let $(L,h)$ be a degree $-1$ holomorphic line bundle over $\Sigma$ equipped with a Hermitian metric $h$ whose Chern curvature form $\ga(h):=-\ddbar\log\det(h_{j\bar k})$ equals to $-\om_\Sigma$. In particular, $c_1(L)<0$. By the definition of degree\footnote{The degree of $L$ equals to $d$ means $\int_{\Sigma}c_1(L)=d$. Note that $c_1(L)=\frac{1}{2\pi}[\ga(h)]$.}, we have $\int_\Sigma c_1(L) = -1$ which implies $\int_\Sigma \ga(h) = -2\pi$ for any Hermitian metric $h$ on $L$. Then the K\"ahler surface $$X :=\PP(L\oplus \sO)$$ is called a {\em minimal ruled (complex) surface}. Note that $X$ is a $\PP^1$ bundle over $\Sigma$ and it is an example of ``pseudo-Hirzebruch surfaces" (cf. \cite{Ton98-extrem}).

\vspace*{1mm}
Let $\mathrm{C}$ be the Poincar\'{e} dual of a fibre of $X$ (i.e., $\mathrm{C}$ is a copy of the Riemann sphere $\mathbb{S}^2$ sitting inside $X$). Further, let $D_\infty$ (resp. $D_0$) be the image of the sub-bundle $L\oplus \{0\} \subset L\oplus\sO$ (resp. $\{0\}\oplus \sO \subset L\oplus\sO$) under the bundle projectivization map to $X$. $D_\infty$ (resp. $D_0$) is called the {\em infinity} (resp. {\em zero}) divisor of $X$. Both $D_\infty$ and $D_0$ are actually copies of $\Sigma$ sitting inside $X$ as its infinity and zero divisors respectively (and $\Sigma$ is identified with $D_0$ as a (complex) curve in $X$). The following intersection formulae hold (see \cite{BHPdeVen-bk, Sze-bk, Ton98-extrem}):
\begin{equation}\label{eq:intersection formluae for ruled surface with genus 2}
\begin{split}
& C^2 =0,~~  D_{\infty}^2 =1, ~~D_{0}^2 = D_{0}\cdot [\Sigma] = -1, \\ & C\cdot D_{\infty} = C\cdot D_{0} = C\cdot [\Sigma] = 1,~~ D_{\infty}\cdot D_{0} = D_{\infty}\cdot[\Sigma] = 0,\\ & c_1(L)\cdot [\Sigma] = -1,~~ [\om_\Sigma]\cdot [\Sigma] = 2\pi,
\end{split}
\end{equation}
where $[\Sigma] \in H^2(\Sigma, \RR)$ is the {\em fundamental class} of $\Sigma$ (and $[\Sigma]$ is identified with $D_0$ in $H^2(X, \RR)$). The last two formulas are follows from $\deg(L)=-1$ and $\text{Area}(\Sigma,\om_\Sigma)=2\pi$ respectively. Since $\Sigma$ is a Riemann surface, we have $H^2(\Sigma,\RR) = H^{1,1}(\Sigma,\RR)$.

\vspace*{1mm}
From the Leray-Hirsch theorem (see \cite{Hatcher-bk}), we have $$ H^2(X,\RR) = \RR \mathrm{C}\oplus \RR D_{\infty}.$$ The works of \cite{Fuj92, LeBrun-Singer93, Ton98-extrem} imply that a class $\be \in H^{1,1}(X,\RR)$ is K\"ahler if and only if $$\be^2 >0,~~ \be \cdot \mathrm{C} >0,~~ \be\cdot D_\infty >0,~~ \be \cdot D_0 >0.$$ In particular, the {\em K\"ahler cone} (i.e. the set of all K\"ahler classes) in $X$ is given by
$$ \sK(X) = \{ a\mathrm{C} + b D_{\infty} \in H^{1,1}(X,\RR):~ a > 0,~ b >0\}.$$

\subsection{Calabi ansatz and momentum profile}
\label{subsec:momentum-profile}
	
Consider a (normalized) K\"ahler class $\Omega_a := 2\pi(\rm{C} + a D_\infty)$ (where $a>0$). We want to construct a K\"ahler form $\om$ in this K\"ahler class satisfying {\em Calabi ansatz/symmetry} (cf. \cite{Cal82}). On $X\setminus(D_\infty \cup D_0)$ we consider the logarithm of the fiber-wise norm $$s = \ln |(z,w)|_{h}^2 = \ln h(z) + \ln |w|^2$$ where $z$ being a local coordinate on $\Sigma$ and $w$ being a fibre coordinate on $L$, and $h$ denotes the hermitian metric on $L$ mentioned before. Write $p : L \to \Sigma$ for the projection map and consider the following $(1,1)$-form (cf. \cite{Cal82, HwangSinger02, Sze-bk}):
\begin{equation}\label{eq:Calabi-ansatz-express-for-omega}
\om = p^\ast\om_\Sigma + \ddbar u(s),
\end{equation}
where $u \in C^\infty(\RR, \RR)$. If $u$ is a strictly convex function such that the function $s \mapsto s+u(s)$ is strictly increasing, then $\om$ is a K\"ahler form on $X\setminus(D_\infty \cup D_0)$. Now, in order to extend it to whole $X$ and to belong in the K\"ahler class $\Omega_a$, the following asymptotic conditions must be satisfied:
\begin{equation}\label{eq:aymptotic-limit-for-derivative-of-u}
\lim_{s \to -\infty} u'(s) = 0,~~\lim_{s \to \infty} u'(s) = a.
\end{equation}
	
We change coordinates to $\tau := u'(s) \in [0,a]$ and let $\phi(\tau) := u''(s)$. The asymptotic conditions for $u'$ will imply (see \cite{Sze-bk, HwangSinger02})
\begin{equation}\label{eq:boundary conditions for momentum profile}
\phi(0)=\phi(a)=0,~~\phi'(0)=1,~~\phi'(a)=-1.
\end{equation}
Also note that $\phi$ is positive on $(0,a)$ since $u$ is a strictly convex function. In general, any smooth function $\phi : [0,a] \to \RR$ with $\phi >0$ on $(0,a)$ and satisfies the above boundary conditions \eqref{eq:boundary conditions for momentum profile} is called a {\em momentum profile}. So for every metric $\om$ satisfying Calabi symmetry there is a momentum profile.

\subsection{Ricci and scalar curvature formula in terms of momentum profile}
	
As in \cite{Sze-bk}, for a special choice of local trivialization of $L$ the K\"ahler metric $\om$ is given by
\begin{equation}\label{eq:local-expression-for-omega}
\om = (1+ u') p^\ast\om_\Sigma + u''  \frac{\sqrt{-1} dw \wedge d\bar{w}}{
|w|^2} = (1+\tau) p^\ast\om_\Sigma + \phi(\tau) \frac{\sqrt{-1} dw \wedge d\bar{w}}{|w|^2}.
\end{equation}
Then the volume form with respect to $\om$ is given by
\begin{equation}\label{eq:vol-form-express-ruled-surface}
\frac{\om^2}{2} = (1+\tau)\phi(\tau)p^\ast\om_\Sigma \wedge \frac{\ii dw \wedge d\bar{w}}{|w|^2} = (1+\tau) p^\ast\om_\Sigma \wedge d\tau \wedge d\theta.
\end{equation}
Here the later expression comes from the following observation $$ \phi(\tau)\frac{\ii dw \wedge d\bar{w}}{|w|^2} = d\tau \wedge d\theta$$ where $\theta$ is the argument of $w$ in polar coordinate. Indeed, writing $w=r e^{i\theta}$ we have $s= \ln |w|^2 = 2\ln r$ and $\frac{\ii dw \wedge d\bar{w}}{|w|^2} = \frac{2}{r}dr \wedge d\theta = ds \wedge d\theta$, and the observation follows from $d\tau = \phi(\tau)ds$. Now using \eqref{eq:vol-form-express-ruled-surface} the Ricci curvature of the metric $\om$ is given by
\begin{equation*}
\textnormal{Ric}(\om) = \left( -2 - \frac{[(1+\tau)\phi]'}{1+\tau} \right) p^\ast\om_\Sigma  -  \left( \frac{[(1+\tau)\phi]'}{1+\tau} \right)' \phi(\tau)\frac{\ii dw \wedge d\bar{w}}{|w|^2}.
\end{equation*}
Then using the formula $R(\om)\om^{n}= n \Rc(\om)\wedge\om^{n-1}$ (here $n=2$) the scalar curvature for the metric $\om$ is given by 
\begin{equation}\label{eq:scal-curv-formula-in-terms-of-momentum-profile}
R(\om) = -\frac{2}{1+\tau} - \frac{[(1+\tau)\phi]''}{1+\tau}.
\end{equation}

\subsection{First Chern class and average scalar curvature}
\label{subsec:1st-Chern-class-average-scal-curv-computation}
	
Now we see that the first Chern class of $X$ is given by $$c_1(X) = -3\mathrm{C} + 2 D_\infty.$$ Indeed, writing $2\pi c_1(X) = R_1 \mathrm{C} + R_2 D_\infty$ for some $R_1, R_2 \in \RR$, and from \eqref{eq:intersection formluae for ruled surface with genus 2} we have $R_2 = 2\pi c_1(X)\cdot \mathrm{C}$ and $ R_1 + R_2 = 2\pi c_1(X)\cdot D_\infty$. From the Ricci formula above we compute
\begin{align*}
2\pi c_1(X)\cdot \mathrm{C} &= - \int_{\CC\setminus\{0\}} \left( \frac{[(1+\tau)\phi]'}{1+\tau} \right)' \phi(\tau)\frac{ i dw \wedge d\bar{w}}{|w|^2} \\ &= -\int_{0}^{2\pi}\int_{0}^{a} \left( \frac{(1+\tau)\phi' + \phi}{(1+\tau)} \right)' d\tau \wedge d\theta \\
&= 4\pi,
\end{align*}
and 
$$ 2\pi c_1(X)\cdot D_{\infty} = \left ( -2 -\frac{(1+a)\phi'(a)+\phi(a)}{1+a}\right) \int_{\Sigma} \om_{\Sigma} = -2\pi, $$
where we used the boundary conditions \eqref{eq:boundary conditions for momentum profile}. Therefore, $R_2 = 4\pi$ and $R_1 = -6\pi$.

\vspace*{1mm}
From the intersections formulae \eqref{eq:intersection formluae for ruled surface with genus 2} we compute
\begin{equation}\label{eq:values-of-intersection-prod-of-Chern-class-and-normalized-Kah-class}
\begin{split}
2\pi c_1(X)\cdot [\om] &= 2\pi (-3\mathrm{C} + 2 D_\infty) \cdot 2\pi (\mathrm{C} + a D_\infty) = 4\pi^2 (2-a), \\
[\om]^2 &= 4\pi^2 (\mathrm{C} + a D_\infty)^2 = 4\pi^2 (a^2 + 2a).
\end{split}
\end{equation}
Thus, the average scalar curvature for the class $[\om]=2\pi(\mathrm{C} + a D_\infty)$ is given by
\begin{align}\label{eq:average-scal-curv-value-for-normalized-Kah-class}
\ul{R} := 2 \frac{2\pi c_1(X)\cdot[\om]}{[\om]^2} = \frac{2(2-a)}{a^2 + 2a}.
\end{align}
	
\begin{rem}
One can also get the above intersection number values by integrating the corresponding local expressions of $\om$, $\Rc(\om)$ and $S(\om)$. For instance,
\begin{align*}
2\pi c_1(X)\cdot[\om] &=\int_{X} R(\om)\frac{\om^2}{2} \\ &= -\int_{\Sigma}\int_{0}^{2\pi}\int_{0}^{a} (2+[(1+\tau)\phi]'')p^\ast\om_\Sigma \wedge d\tau \wedge d\theta \\ &=  4\pi^2(2-a).
\end{align*}
\end{rem}

\vspace*{0.5mm}
\section{Non-existence of coupled cscK metrics on ruled surfaces}
\label{sec:Non-existence of coupled cscK metrics on ruled surfaces}

\subsection{Non-existence of usual cscK metrics}
\label{subsec:Fut-calc-for-one-Kah-class-ruled-surf-genus-2}
	
It is well-known that any K\"ahler class on the ruled surface $X$ does not admit a cscK metric (see \cite{Sze-bk} for instance). For the reader's convenience, we will give a short  proof of this fact by solving the ODE associated to the cscK equation \eqref{eq:cscK} and also by computing the Futaki invariant.

\vspace*{1mm}
We consider a normalized K\"ahler class $\Omega_{a} =2\pi\left(\mathrm{C} + a D_\infty\right)$, where $a>0$, on the ruled surface $X$. Suppose $\om\in\Omega_a$ is a {\em cscK metric} satisfying Calabi ansatz, i.e. $R(\om)=\ul{R}$ and there exists a strictly convex function $u\in C^\infty(\RR,\RR)$ satisfying \eqref{eq:aymptotic-limit-for-derivative-of-u} such that the K\"ahler metric $\om$ is given by \eqref{eq:Calabi-ansatz-express-for-omega}. In particular, we have a real variable $\tau=u'(s)\in(0,a)$ and the momentum profile $\phi:[0,a]\to\RR$, which is a smooth function with $\phi>0$ on $(0,a)$ and satisfies boundary condition \eqref{eq:boundary conditions for momentum profile}.

\vspace*{1mm}
Since the scalar curvature of $\om$ is given by \eqref{eq:scal-curv-formula-in-terms-of-momentum-profile}, the cscK equation $R(\om)=\ul R$ becomes the following second order ODE 
\begin{equation}\label{eq:2nd-order-ODE-scal-curv-equ-on-ruled-surf}
[(1+\tau)\phi]'' = -2 - \ul{R}(1+\tau).
\end{equation}
A general solution of this ODE is the following:
\begin{equation}
(1+\tau)\phi(\tau) = -\tau^2 - \ul{R}\frac{3\tau^2 + \tau^3}{6} + C_1 \tau + C_2
\end{equation}
for arbitrary constants $C_1, C_2$. Using $\phi(0)=0$ and $\phi'(0)=1$ (see \eqref{eq:boundary conditions for momentum profile}), we have $C_2 = 0$ and $C_1 = 1$. Since $\ul{R} = \frac{2(2-a)}{a^2 + 2a}$ (see \eqref{eq:average-scal-curv-value-for-normalized-Kah-class}), we get
\begin{equation}
\label{eq:uniq-sol-of-ODE-for-cscK-on-ruled-surf}
\phi(\tau) = \frac{1}{1+\tau} (\tau -\tau^2 + \frac{a-2}{3(a^2+2a)}(\tau^3 + 3 \tau^2)).
\end{equation}
But we compute that $\phi(a) = -\frac{2a^2}{3(a+2)} \neq 0$, which is a contradiction. Thus, the normalized K\"ahler class $\Omega_a$ on $X=\PP(L\oplus \sO)$ does not admit a cscK metric satisfying Calabi ansatz.

\vspace*{1mm}
We next compute the Futaki invariant. Let us consider the holomorphic vector field $\xi := \kappa w \frac{\d}{\d w}$, where $\kappa\in{\RR\setminus\{0\}}$ and $w$ is the fiber coordinate for the holomorphic line bundle $L$. Since for a function $f(\tau)$ of the variable $\tau$, we have $\nabla^{1,0}_{\om}f(\tau)=f'(\tau)w\frac{\d}{\d w}$, we see that $\xi = \nabla^{1,0}_{\om}f_{0}(\tau)$ for the holomorphy potential $f_0(\tau):=\kappa\tau$. So, $\xi \in \fh(\Omega_a)$ and the Futaki invariant is given by
\begin{equation}
\Fut (\al,\xi) = \frac{\kappa}{\Omega_{a}^2}\int_{X} \tau R(\om)\om^2 - \frac{\kappa \ul{R}}{\Omega_{a}^2}\int_{X}\tau\om^2.
\end{equation}
Recall $\Omega_{a}^2 = 4\pi^2(a^2+2a)$ and $\ul{R} = \frac{2(2-a)}{a^2 + 2a}$ (see \eqref{eq:values-of-intersection-prod-of-Chern-class-and-normalized-Kah-class} and \eqref{eq:average-scal-curv-value-for-normalized-Kah-class}). Now we compute
\begin{equation}\label{eq:integration-value-of-tau-single-Kah-metric}
\begin{split}
&\frac{1}{\Omega_{a}^2}\int_{X}\tau\om^2 =\frac{8\pi^2}{\Omega_{a}^2}\int_{0}^{a}\tau(\tau+1)d\tau= \frac{4\pi^2a^2(2a+3)}{3 \cdot \Omega_{a}^2} = \frac{(2a+3)a}{3(a+2)}, \\
&\frac{1}{\Omega_{a}^2}\int_{X}\tau R(\om)\om^2 = \frac{-8\pi^2}{\Omega_{a}^2}  \int_{0}^{a}\tau(2 + [(1+\tau)\phi]'')d\tau = \frac{8\pi^2 a}{\Omega_{a}^2} 
= \frac{2}{a+2}.
\end{split}
\end{equation}
Combining these values yield
\begin{equation}
\label{eq:Futaki value on ruled surface-single metric}
\Fut(\al,\xi) = \kappa(\frac{2}{a+2} - \frac{2(2a+3)(2-a)}{3(a+2)^2}) = \frac{4a(a+1)\kappa}{3(a+2)^2} \neq 0.
\end{equation}
Hence, there is no cscK metric in the normalized K\"ahler class $\Omega_{a} = 2\pi(\mathrm{C}+a D_\infty)$.

\vspace*{1mm}
Now, suppose $\Omega:= a_1\mathrm{C} + a_2 D_\infty$, where $a_1, a_2 >0$, is any K\"ahler class on $X$. Assume $\theta \in \Omega$ is a cscK metric. Then $\frac{2\pi}{a_1}\theta$ is a cscK metric in the normalized K\"ahler class $\Omega_{\frac{a_2}{a_1}} = 2\pi(\mathrm{C} + \frac{a_2}{a_1} D_\infty)$, which is a contradiction. Therefore, any K\"ahler class on the ruled surface $X$ does not admit a cscK metric.

\subsection{Non-existence of coupled cscK metrics by computing Futaki invariant}
\label{sec:Fut-calc-two-Kah-classes}
	
We consider K\"ahler metrics $\om\in\Omega_a = 2\pi(\mathrm{C} + a D_\infty)$ and $\chi\in\Omega_b = 2\pi(\mathrm{C} + b D_\infty)$ both satisfying Calabi ansatz. Then $\om$ is locally given by \eqref{eq:local-expression-for-omega} in terms of the momentum profile $\phi$ satisfying \eqref{eq:boundary conditions for momentum profile}. Similarly, since $\chi$ also satisfies Calabi ansatz, we have
$$ \chi = p^\ast\om_\Sigma + \ddbar v(s),$$ 
for some strictly convex function $v\in C^\infty(\RR,\RR)$ such that $s \mapsto s+v(s)$ is strictly increasing function. The following asymptotic conditions for the function $v$ hold:
$$\lim_{s \to -\infty} v'(s) = 0,~\lim_{s \to \infty} v'(s) = b. $$
Further, for a special choice of local trivialization of the holomorphic line bundle $L$, the metric $\chi$ is locally written as (cf. \cite{Sze-bk})
\begin{equation}
\label{eq:local-express-chi}
\chi = (1+ v') p^\ast\om_\Sigma + v'' \frac{\sqrt{-1} dw \wedge d\bar{w}}{|w|^2}.
\end{equation}
We fix the variable $\tau :=u'(s)\in(0,a)$ corresponding to the metric $\om$ (see $\S$ \ref{subsec:momentum-profile}). Let $\psi : [0,a]\to[0,b]$ be the smooth function defined by  $\psi(\tau) :=v'(s)$ and satisfying the boundary condition
\begin{equation}\label{eq:boundary conditions for psi}
\psi(0)=0,~\psi(a)=b.
\end{equation}
Note that the function $\psi$ is bijective. Differentiating $\psi$ with respect to $s$ we have  $$\psi'(\tau)\phi(\tau) = v''(s). $$ So the function $[0,b]\ni\sigma \mapsto \psi'(\tau)\phi(\tau)$, where $\psi(\tau)=\sigma$, is the momentum profile for the K\"ahler metric $\chi$. Hence, from \eqref{eq:local-express-chi} we obtain the following local expression for $\chi$ in terms of momentum profile
\begin{equation}\label{eq:local-expression-for-chi-momentum-profile}
\chi =  (1+\psi(\tau)) p^\ast\om_\Sigma + \psi'(\tau)\phi(\tau) \frac{\sqrt{-1} dw \wedge d\bar{w}}{|w|^2}.
\end{equation}

\vspace*{1mm}
Using the intersection number formula \eqref{eq:intersection formluae for ruled surface with genus 2} we get
\begin{equation*}
[\chi]\cdot[\om] = 2\pi (\mathrm{C} + b D_\infty)\cdot 2\pi (\mathrm{C} + a D_\infty) = 4\pi^2 (b+a+ab).
\end{equation*}
Note that one can obtain the same value by directly integrating the local expression \eqref{eq:local-express-chi} of $\chi$. Recall $[\om]^2=\Omega_{a}^2 = 4\pi^2 (a^2 + 2a)$ and so we get
\begin{align}\label{eq:average-of-trace-of-chi-wrt-omega}
\ul{\chi} = 2 \frac{[\chi]\cdot[\om]}{[\om]^2} = 2 \frac{b+a+ab}{a^2+2a},
\end{align}
and recall one again that $\ul R = \frac{2(2-a)}{a^2 +2a}$ (see \eqref{eq:average-scal-curv-value-for-normalized-Kah-class}). We  will denote the difference $\ul R -\ul\chi$ by $\hat{S}_\chi$ (or simply $\hat S$). Then, 
\begin{equation}\label{eq:difference-of-average-scal-curv-chi-average}
\hat{S}_{\chi} := \hat{S} := \ul{R} - \ul{\chi} = \frac{2(2-a)}{a^2 +2a} - \frac{2(a+b+ab)}{a^2 +2a} =  \frac{2(2-2a-b-ab)}{a^2 +2a} .
\end{equation}

\vspace*{1mm}
Once again we consider the same the holomorphic vector field $\xi = \kappa w \frac{\d}{\d w}$, where $\kappa\in{\RR\setminus\{0\}}$, on the ruled surface $X$. Taking $f_0(\tau):= \kappa\tau, ~ f_1(\tau) := \kappa\psi(\tau)$ we see that  $\xi =\nabla^{1,0}_{\om} f_0=\nabla^{1,0}_{\chi} f_1$, i.e.
$\xi\in \fh(\Omega_a,\Omega_b)$ (note that $\nabla^{1,0}_{\chi}f_1(\tau) = \kappa \nabla^{1,0}_{\chi} \psi(\tau) = \kappa w\frac{\d}{d w} =\xi$ since $\psi(\tau)=v'(s)$ plays the same role for $\chi$ as $\tau$ plays for $\om$). Next we compute the generalized Futaki invariant at $\xi\in \fh(\Omega_a,\Omega_b)$
\begin{equation*}
\Fut(\Omega_a,\Omega_b,\xi) = \frac{\kappa}{\Omega_{a}^2}\int_{X}\tau \left(R(\om) -\La_{\om}\chi -\hat{S}\right)\om^{2} + \kappa\int_{X}\psi(\tau)\left(\frac{\chi^{2}}{\Omega_{b}^2} - \frac{\om^{2}}{\Omega_{a}^2}\right),
\end{equation*}
where $\hat{S}$ is the constant given in \eqref{eq:difference-of-average-scal-curv-chi-average}. From \eqref{eq:integration-value-of-tau-single-Kah-metric} we have the following
\begin{equation*}
\frac{1}{\Omega_{a}^2}\int_{X}\tau\om^2 = \frac{(2a+3)a}{3(a+2)}, ~~~ \frac{1}{\Omega_{a}^2}\int_{X}\tau R(\om)\om^2 = \frac{2}{a+2}.
\end{equation*}
Since $\psi(\tau)$ plays the role of $\tau$ for the metric $\chi$, similarly we get
\begin{equation*}
\frac{1}{\Omega_{b}^2}\int_{X}\psi(\tau)\chi^2= \frac{(2b+3)b}{3(b+2)}.
\end{equation*}
Next we compute
\begin{equation*}
\begin{split}
\frac{1}{[\om]^2}\int_{X}\tau(\La_{\om}\chi) \om^2 &= \frac{1}{[\om]^2}\int_{X} \tau ( \psi' + \frac{1+\psi}{1+\tau})\om^2 \\
&= \frac{8\pi^2}{[\om]^2} \int_{0}^{a}\tau\Big(1+\psi+\psi'(1+\tau)\Big)d\tau \\
& = \frac{a + 2b(1+a)}{a+2} -\frac{1}{[\om]^2}\int_{X}\psi(\tau)\om^2.
\end{split}
\end{equation*}
Putting these values and simplifying we obtain
\begin{align*}
\frac{1}{\kappa}\Fut(\Omega_a,\Omega_b,\xi) 
&=  \frac{2}{a+2} - \frac{\hat{S}(2a+3)a}{3(a+2)} - \frac{a + 2b(1+a)}{a+2} + \frac{(2b+3)b}{3(b+2)} \\
&= \frac{2 (5a^2 + b^2 + 2a^2b + 4a)}{3(a + 2)^{2}(b + 2)} > 0.
\end{align*}
Hence the pair $(\Omega_a,\Omega_b)$ of normalized K\"ahler classes on the ruled surface $X$ does not admit any coupled cscK metric. This proves Theorem \ref{thm:normalized-Kah-classes-not-admit-coupled-csck}.

\vspace*{1mm}
In general, suppose $\al = a_1\mathrm{C} + a_2 D_\infty$ and $\be = b_1\mathrm{C} + b_2 D_\infty$, where $a_1, a_2, b_1, b_2 > 0$, are two K\"ahler classes (not necessarily normalized) on the ruled surface $X$. Suppose $(\om,\chi)$ is a coupled cscK metric with $\om\in\al$ and $\chi\in \be$. Consider the following scaling:
$$ \al' :=2\pi(\mathrm{C} + \frac{a_2}{a_1} D_\infty),~~\be' := 2\pi(\mathrm{C} + \frac{b_2}{b_1} D_\infty),~~\om' := \frac{2\pi}{a_1}\om,~~ \chi' := \frac{2\pi}{b_1}\chi.$$ 
Then both $\al'$ and $\be'$ are normalized K\"ahler classes on $X$, and $\om'$ (resp. $\chi'$) is a K\"ahler metric in $\al'$ (resp. $\be'$). Since the Ricci curvature is scale invariant, we have $$\Rc(\om')=\Rc(\om)=\Rc(\chi)=\Rc(\chi'),$$ where the second equality holds because we assumed that $(\om,\chi)$ is a coupled cscK metric. It follows that $R(\om')=\frac{a_1}{2\pi}R(\om)$ and
$$\La_{\om'}\chi' = n \frac{\chi'\wedge (\om')^{n-1}}{(\om')^n} =  \frac{a_1}{b_1} \cdot n \frac{\chi\wedge \om^{n-1}}{\om^n} = \frac{a_1}{b_1}\La_{\om}\chi.$$
It implies that the corresponding averages are related by $$\ul{R}_{[\om']}=\frac{a_1}{2\pi}\ul{R}_{[\om]}, ~~ \ul{\chi'}_{[\om']} = \frac{a_1}{b_1}\ul{\chi}_{[\om]}. $$
Since $(\om,\chi)$ is a coupled cscK metric, we have $$R(\om)-\La_{\om}\chi = \ul{R}_{[\om]} -\ul{\chi}_{[\om]}.$$ 
Then
\begin{equation}\label{eq:twsited-cscK-after-scaling}
\begin{split}
R(\om') - \La_{\om'}\chi' - \ul{R}_{[\om']} + \ul{\chi'}_{[\om']} &= \frac{a_1}{2\pi}\left( R(\om)-\ul{R}_{[\om]} \right) - \La_{\om'}\chi' + \ul{\chi'}_{[\om']} \\
&= \frac{a_1}{2\pi}\left( \La_{\om}{\chi}-\ul{\chi}_{[\om]} \right) - \La_{\om'}\chi' + \ul{\chi'}_{[\om']} \\
&= \frac{a_1}{2\pi} \frac{b_1}{a_1} \left( \La_{\om'}\chi' - \ul{\chi'}_{[\om']} \right) - \La_{\om'}\chi' + \ul{\chi'}_{[\om']} \\
&= \left(\frac{b_1}{2\pi} -1 \right) \left( \La_{\om'}\chi' - \ul{\chi'}_{[\om']} \right).
\end{split}
\end{equation}
Therefore, if $b_1 = 2\pi$, i.e. $\be$ is a normalized K\"ahler class, then $(\om',\chi')$ is a coupled cscK metric for the pair $(\al', \be')$ of normalized K\"ahler classes on $X$, which is a contradiction by Theorem \ref{thm:normalized-Kah-classes-not-admit-coupled-csck}. Hence, the pair $(\al := a_1 \mathrm{C} + a_2 D_\infty, \be := 2\pi \mathrm{C} + b_2 D_\infty)$ of K\"ahler classes does not admit any coupled cscK metric, which proves the first part of Corollary \ref{cor:the-pair-of-any-Kah-and-normal-Kah-not-admit-coupled-cscK}.

\vspace*{1mm}
Next, assuming Theorem \ref{thm:solvability-of-J-equ-under-some-condition} we shall prove the second part of Corollary \ref{cor:the-pair-of-any-Kah-and-normal-Kah-not-admit-coupled-cscK} (i.e. non-existence of coupled cscK metrics when $b_1 \neq 2\pi$). Since $\al'= 2\pi(\mathrm{C} + \frac{a_2}{a_1} D_\infty)$ and $\be' = 2\pi(\mathrm{C} + \frac{b_2}{b_1} D_\infty)$ are normalized K\"ahler classes on $X$, by applying Theorem \ref{thm:solvability-of-J-equ-under-some-condition} we see the right hand side of \eqref{eq:twsited-cscK-after-scaling} is zero if 
\[ \frac{b_2}{b_1} > \frac{(\frac{a_2}{a_1})^2}{2 (1 + \frac{a_2}{a_1})},\] 
i.e. the condition \eqref{eq:ineq-imply-non-exis-for-non-normalized-case} holds, and hence $(\om', \chi')$ is a coupled cscK metric, which is a contradiction by Theorem \ref{thm:normalized-Kah-classes-not-admit-coupled-csck}. This completes the proof of the second part of Corollary \ref{cor:the-pair-of-any-Kah-and-normal-Kah-not-admit-coupled-cscK}.

\vspace*{1mm}
One can also verify that the left hand side of \eqref{eq:twsited-cscK-after-scaling} can be written as
\begin{align*}
R(\om') - \La_{\om'}\chi' =  \ul{R}_{[\om']} -\ul{\chi'}_{[\om']} + (\frac{a_1}{2\pi} -\frac{a_1}{b_1})(R(\om)-\ul{R}_{[\om]}).
\end{align*}
Since the class $\al=a_1 \mathrm{C}+a_2 D_\infty$ does not admit any cscK metric, the pair $(\om',\chi')$ is not a coupled cscK metric if $b_1\neq 2\pi$. So, we don't get any contradiction here. If we can show that the corresponding generalized Futaki invariant is not zero identically on $\fh(\al,\be)$, then the pair $(\al,\be)$ will not admit any coupled cscK metric. We expect this to be true.

\vspace*{1mm}
We now complete the proof of Theorem \ref{thm:solvability-of-J-equ-under-some-condition}.
	
\begin{proof}[Proof of Theorem \ref{thm:solvability-of-J-equ-under-some-condition}]
Consider the normalized K\"ahler classes $\Omega_{a} = 2\pi(\mathrm{C} + a D_\infty)$ and $\Omega_{b} = 2\pi(\mathrm{C} + b D_\infty)$, where $a, b > 0$. Let $\om\in\Omega_a$ and $\chi\in\Omega_b$ be K\"ahler metrics satisfying Calabi ansatz (see $\S$ \ref{subsec:momentum-profile}). In particular, we have the variable $\tau=u'(s)\in(0,a)$ and $\om$ is locally given by \eqref{eq:local-expression-for-omega} in terms of momentum profile $\phi$ satisfying \eqref{eq:boundary conditions for momentum profile}. Moreover, the metric $\chi$ is locally given in \eqref{eq:local-expression-for-chi-momentum-profile} in terms of the function $\psi : [0, a] \longrightarrow [0, b]$ satisfying the boundary condition \eqref{eq:boundary conditions for psi}. We then see that the eigenvalues of $\chi$ with respect to $\om$, i.e. of $\om^{-1}\chi$, are $\frac{1+\psi(\tau)}{1+\tau}$ and $\psi'(\tau)$, and so the trace $\La_{\om}\chi$ is given by 
\begin{equation}\label{eq:trace-formula-in-trems-of-psi}
\La_{\om} \chi = \frac{1+\psi(\tau)}{1+\tau} + \psi'(\tau).
\end{equation}
Therefore, the J-equation \eqref{eq:J-equ} is equivalent to the following ODE
\begin{equation}
\psi'(\tau) + \frac{1 + \psi(\tau)}{1 + \tau} = \ul\chi.
\end{equation}
A general solution of this ODE is given by
\begin{equation*}
(1+\tau) \psi(\tau) = \ul\chi (\tau + \frac{\tau^2}{2}) - \tau + C_1,
\end{equation*}
for a constant $C_1$. Using the boundary condition $\psi(0)=0$ we get that $C_1 = 0$. Now using the value $\ul{\chi} = 2 \frac{[\chi]\cdot[\om]}{[\om]^2} =  \frac{2(b+a+ab)}{a^2+2a}$ (see \eqref{eq:average-of-trace-of-chi-wrt-omega}) and a simple algebraic computation, we see that the unique solution is the following
\begin{equation}\label{eq:uniq-sol-for-J-equ-ruled-surf}
\psi(\tau) = \frac{a+b+ab}{a^2+2a}\tau + \frac{(b-a)(1+a)}{a^2+2a}\frac{\tau}{1+\tau}.
\end{equation}
One can also easily check that the other boundary condition $\psi(a)=b$ is satisfied. Differentiating $\psi$ once we get
$$ \psi'(\tau) = \frac{a+b+ab}{a^2+2a} + \frac{(b-a)(1+a)}{a^2+2a}\frac{1}{(1+\tau)^2}.$$
We see that $\psi$ is strictly increasing when $b\geq a$. On the other hand, if $b<a$, taking one more derivative we see that $\psi$ is strictly convex. At $\tau=0$ we have $\psi'(0)= \frac{2b(1+a)-a^2}{a^2+2a}$ which is non-negative if and only if $b\geq \frac{a^2}{2(1+a)}$. Therefore, $\psi$ is strictly positive on $(0,a)$ when $b\geq \frac{a^2}{2(1+a)} >0,$ and hence the J-equation \eqref{eq:J-equ} is solvable under the condition \eqref{eq:ineq-imply-non-exis-for-non-normalized-case}.
\end{proof}

\vspace*{0.5mm}
\section{Existence of twisted cscK metrics on ruled surfaces}
\label{sec:Existence of twisted cscK metrics on ruled surfaces}

\subsection{Reduction of the twisted cscK equation on ruled surfaces}
	
Let $\om \in \Omega_{a} = 2\pi(\mathrm{C} + a D_\infty)$ and $\chi \in \Omega_{b} = 2\pi(\mathrm{C} + b D_\infty)$ be K\"ahler metrics satisfying Calabi ansatz, where $a,b>0$ (see $\S$ \ref{subsec:momentum-profile}). In particular, we have the variable $\tau=u'(s)\in(0,a)$ and $\om$ is locally given by \eqref{eq:local-expression-for-omega} in terms of momentum profile $\phi$ satisfying \eqref{eq:boundary conditions for momentum profile}. Moreover, the metric $\chi$ is locally given in \eqref{eq:local-expression-for-chi-momentum-profile} in terms of the function $\psi : [0, a] \longrightarrow [0, b]$ satisfying the boundary condition \eqref{eq:boundary conditions for psi}. Recall the twisted cscK equation \eqref{eq:twisted-cscK}
\begin{equation}
R(\om) - \La_\om \chi = \ul{R} -\ul\chi.
\end{equation}
Also, the value of the difference $\ul{R} -\ul\chi$ (which we denoted by $\hat{S}_{\chi}$ or simply $\hat{S}$) is given by (see \eqref{eq:difference-of-average-scal-curv-chi-average})
$$ \hat{S}_\chi = \ul{R} - \ul{\chi} =  \frac{2(2-2a-b-ab)}{a^2 +2a}. $$
Now using the formula \eqref{eq:scal-curv-formula-in-terms-of-momentum-profile} for the scalar curvature $R(\om)$ and the above formula \eqref{eq:trace-formula-in-trems-of-psi}, the twisted cscK equation \eqref{eq:twisted-cscK} becomes the following ordinary differential equation with boundary condition $\psi(0)=0$ and $\psi(a) = b$:
\begin{equation*}
\psi'(\tau) + \frac{\psi(\tau)}{1+\tau} = -\frac{3 + [(1+\tau)\phi]''}{1+\tau} +  \frac{2(2a + b + ab -2)}{a^2 +2a}. 
\end{equation*}
A general solution of this ODE is given by
$$ (1+\tau)\psi(\tau) = -(3\tau +(1+\tau)\phi' +\phi) + \frac{(2a + b + ab -2)}{a^2 +2a}(\tau^2 + 2\tau) + C,$$ where $C$ is a constant. Putting $\tau=0$ we get $C=\phi'(0)=1$. Therefore, if $\om$ is a $\chi$-twisted cscK metric, then the functions $\psi$, $\phi$ must satisfy the following 
\begin{equation}\label{eq:unique solution of the ODE BVP}
(1+\tau)\psi(\tau) = -\Big(3\tau +(1+\tau)\phi' +\phi\Big) + \frac{(2a + b + ab -2)}{a^2 +2a}(\tau^2 + 2\tau) + 1.
\end{equation}
One can check that the other boundary condition $\psi(a) = b$ is also satisfied.

\subsection{Existence of twisted cscK metrics for a choice of momentum profile}
	
Our aim is to find smooth functions $\phi:[0,a]\to \RR_{\geq 0}$ and $\psi:[0,a]\to[0,b]$ such that both $\phi,\psi >0$ on $(0,a)$ with the boundary conditions \eqref{eq:boundary conditions for momentum profile} (for $\phi$) and \eqref{eq:boundary conditions for psi} (for $\psi$) and the pair $(\phi,\psi)$ solves \eqref{eq:unique solution of the ODE BVP}. Let us consider 
\begin{equation}\label{eq:specific-momentum-profile-for-omega}
\phi(\tau):= \frac{a\tau -\tau^2}{a}
\end{equation}
where $\tau\in[0,a]$. We easily see that $\phi$ is positive on $(0,a)$ and satisfies \eqref{eq:boundary conditions for momentum profile}, i.e. $\phi$ is a momentum profile and it will give us the K\"ahler metric $\om$. Now, for this specific $\phi$, we have
\begin{equation}\label{eq:value-of-1-plus-tau-times-phi-prime-for-special-momentum-profile}
(1+\tau)\phi'(\tau) + \phi(\tau) = \frac{(1+\tau)(a-2\tau)}{a} + \frac{a\tau - \tau^2}{a}
= 1 + \frac{2(a - 1) \tau}{a} - \frac{3 \tau^2}{a}.
\end{equation}
Then from \eqref{eq:unique solution of the ODE BVP} and \eqref{eq:value-of-1-plus-tau-times-phi-prime-for-special-momentum-profile} we get
\begin{align*}
(1+\tau)\psi(\tau) &=  \frac{3}{a}\tau^2 + \frac{2-5a}{a}\tau -1 + \frac{(2a + b + ab -2)}{a^2 +2a}(\tau^2 + 2\tau) + 1 \\
&= \frac{ ab + 5a + b + 4}{a^2 + 2a} \tau^2 + \frac{-5a^2 - 4a + 2b + 2ab}{a^2+2a}\tau,
\end{align*}
and dividing both sides by $(1+\tau)$ we obtain
\begin{equation*}
\psi(\tau) = \frac{ ab + 5a + b + 4}{a^2 + 2a}\frac{\tau^2}{1+\tau} + \frac{-5a^2 - 4a +2b + 2ab}{a^2+2a}\frac{\tau}{1+\tau}.
\end{equation*}
Then differentiating $\psi$ with respect to $\tau$ we obtain
\begin{align*}
\psi'(\tau) &= \frac{ (ab + 5a + b + 4)}{a^2 + 2a}\frac{\tau^2+2\tau}{(1+\tau)^2} + \frac{-5a^2 - 4a +2b + 2ab}{a^2+2a}\frac{1}{(1+\tau)^2} \\
&= \frac{ (ab + 5a + b + 4)}{a^2 + 2a} + \frac{(a+1)(-5a-4+b)}{a^2+2a}\frac{1}{(1+\tau)^2}.
\end{align*}
It implies $\psi$ is strictly increasing when $b\geq(5a+4)$. While if $0< b<(5a+4)$, taking one more derivative we see that $\psi$ is strictly convex. In particular, $\psi'(\tau) > \psi'(0)$ for all $\tau\in(0,a)$, and putting $\tau=0$ we have $$\psi'(0)= \frac{-5a^2 - 4a +2b + 2ab}{a^2+2a},$$ which is non-negative if and only if $b\geq \frac{a(5a+4)}{2(1+a)}$. Thus, for the choice \eqref{eq:specific-momentum-profile-for-omega} of $\phi$ and $b\geq \frac{a(5a+4)}{2(1+a)}$ we have $\psi(\tau)\in(0,b)$ for all $\tau\in(0,a)$. One can also verify that $\psi$ satisfies the boundary conditions \eqref{eq:boundary conditions for psi}. Therefore, $\psi$ will give us a K\"ahler metric $\chi$, and the K\"ahler metric $\om$ whose momentum profile is given by \eqref{eq:specific-momentum-profile-for-omega} is a $\chi$-twisted cscK metric on the ruled surface $X=\PP(L\oplus \sO)$. This completes the proof of Theorem \ref{thm:exis-of-twisted-cscK-on-min-ruled-surfaces}.

\vspace*{0.5mm}
\section{Bound of the Chen-Cheng invariant $R([\om], [\chi])$ on ruled surfaces}
\label{sec:Bound of the Chen-Cheng invariant on ruled surfaces}
	
In this section, we will give a lower bound for the invariant $R(\Omega_a, \Omega_b)$ corresponding to the normalized K\"ahler classes $\Omega_a = 2\pi (\mathrm{C}+ a D_\infty)$ and $\Omega_b = 2\pi (\mathrm{C}+ b D_\infty)$ on the ruled surface $X = \PP(L\oplus \sO)$ and prove Proposition \ref{prop:bound-of-Chen-Cheng-invariant-for-tcscK}. For notational simplicity, we denote $\hat{S}_t := t\ul{R}-(1-t)\ul\chi$ for any $t\in[0,1]$. Since $\ul{R} = \frac{2(2-a)}{a^2 + 2a}$ and $\ul{\chi} = \frac{2(b+a+ab)}{a^2+2a}$ (see \eqref{eq:average-scal-curv-value-for-normalized-Kah-class} and \eqref{eq:average-of-trace-of-chi-wrt-omega}), we compute that
\begin{equation}\label{value-of-hat S t-on-ruled-surf}
\hat{S}_t = \frac{2t(2-a)}{a^2+2a} -\frac{2(1-t)(b+a+ab)}{a^2+2a} = \frac{2\Big( t(2 + b + ab) - a -b -ab\Big)}{a^2 + 2a}.
\end{equation}

\vspace*{1mm}
Fix $t\in[0,1]$ and suppose $\chi$ and $\om_{\vp_t}$ satisfy Calabi ansatz, i.e. $$ \chi = p^\ast\om_\Sigma + \ddbar v(s),~~ \om_{\vp_t} = p^\ast\om_\Sigma + \ddbar u(s,t),$$ 
for some real-valued function $v$ (see $\S$ \ref{sec:Fut-calc-two-Kah-classes}) and some real-valued function $u(\cdot, t)$ (see $\S$ \ref{subsec:momentum-profile}). In particular, we have the variable $\tau:= u'$ and the momentum profile $\phi(\tau):= u''$ for the K\"ahler form $\om_{\vp_t}$. Note that $\tau$ depends on the parameter $t$. As before, we consider the function $\psi_{t} : [0,a] \to [0,b]$ defined by $ \psi_{t} (u'(s,t)) = v'(s)$. Varying $t\in[0,1]$ we can also treat $\psi$ as the function from $[0,a]\times[0,1]$ to $[0,b]$. Using \eqref{eq:scal-curv-formula-in-terms-of-momentum-profile} and \eqref{eq:trace-formula-in-trems-of-psi} the continuity path \eqref{eq:Chen-continuity-path-tcscK} will be the following ODE:
\begin{equation}\label{eq:ODE-for-Chen-continuity-path-on-ruled-surf}
(1-t)[(1+\tau)\psi]' = -(1+t) - t[(1+\tau)\phi]'' - \hat{S}_{t} (1 + \tau).
\end{equation}
The general solution of this ODE is given by
\begin{equation*}
(1-t)(1+\tau)\psi(\tau) = -(1+t)\tau - t((1+\tau)\phi' + \phi) - \frac{\hat{S}_{t}}{2}(\tau^2 + 2\tau) + C_1,
\end{equation*}
and putting $\tau =0$ and using boundary values \eqref{eq:boundary conditions for momentum profile} and \eqref{eq:boundary conditions for psi}, we get $C_1= t\phi'(0)=t$. Thus, with the value \eqref{value-of-hat S t-on-ruled-surf} of the constant $\hat{S}_t$, the unique solution of the above ODE \eqref{eq:ODE-for-Chen-continuity-path-on-ruled-surf} is given by
\begin{equation}\label{eq:unique sol for ODE of continuity method on ruled surface}
(1-t)(1+\tau)\psi(\tau) = t -(1+t)\tau - t((1+\tau)\phi' + \phi) - \frac{t(2 + b + ab) - a - b - ab}{a^2 + 2a}(\tau^2 + 2\tau).
\end{equation}
One can also check that the other boundary condition $\psi(a)=b$ holds.

\vspace*{1mm}
For $t=0$ in \eqref{eq:unique sol for ODE of continuity method on ruled surface} we get that $\psi$ satisfies \eqref{eq:uniq-sol-for-J-equ-ruled-surf} which is expected because at $t=0$ the continuity path \eqref{eq:Chen-continuity-path-tcscK} is the J-equation \eqref{eq:J-equ}, and we proved above that $\psi \in (0,b)$ on $(0,a)$ when $b\geq \frac{a^2}{2(1+a)}.$ While for $t=1$ in \eqref{eq:unique sol for ODE of continuity method on ruled surface}, we have
\begin{equation*}
[(1+\tau)\phi]' = 1-2\tau + \frac{a-2}{a^2+2a}(\tau^2 + 2\tau),
\end{equation*}
which implies $\phi$ is given by \eqref{eq:uniq-sol-of-ODE-for-cscK-on-ruled-surf} (using the boundary condition $\phi(0)=0$). But the other required boundary condition $\phi(a)=0$ is not satisfied because one can compute that $\phi(a) = -\frac{2a^2}{3(a+2)}$. It implies that there is no cscK metric in the K\"ahler class $\Omega_a$, and hence there is no path of K\"ahler metrics $\{\om_{\vp_t}\}_{t\in[0,1]}$ in the class $\Omega_a = 2\pi(\mathrm{C}+a D_\infty)$ which solves \eqref{eq:Chen-continuity-path-tcscK} for any K\"ahler metric $\chi$ in the class $\Omega_b = 2\pi(\mathrm{C}+b D_\infty)$.

\vspace*{1mm}
Now, suppose the momentum profile of $\om_{\vp_t}$ is given by $\phi(\tau):=(a\tau - \tau^2)/a$, where $\tau\in[0,a]$ (as in Theorem \ref{thm:exis-of-twisted-cscK-on-min-ruled-surfaces} or see Eq. \eqref{eq:specific-momentum-profile-for-omega}). Then, from \eqref{eq:value-of-1-plus-tau-times-phi-prime-for-special-momentum-profile} and \eqref{eq:unique sol for ODE of continuity method on ruled surface} we get
\begin{align*}
&(1-t)(1+\tau)\psi(\tau) \\ &= -(1+t)\tau -\frac{2t(a - 1) \tau}{a} + \frac{3t \tau^2}{a} - \frac{t(2 + b + ab) - a - b - ab}{a^2 + 2a}(\tau^2 + 2\tau) \\ 
&= \frac{t (3a + 4 -b -ab) + a+ b + ab}{a^2 + 2a} \tau^2 + \frac{-t(3a^2 + 4a + 2b + 2ab) - a^2 + 2b(1 + a)}{a^2 + 2a} \tau,
\end{align*}
and dividing both sides by $(1+\tau)$ we obtain
\begin{equation*}
\begin{split}
(1-t)\psi(\tau) &= \frac{t(3a + 4 -b -ab) + a + b + ab}{a^2 + 2a} \frac{\tau^2}{1 + \tau} \\ &\hspace*{1cm} + \frac{-t(3a^2 + 4a + 2b + 2ab) - a^2 + 2b (1 + a)}{a^2 + 2a} \frac{\tau}{1 + \tau}.
\end{split}
\end{equation*}
Now, differentiating the above equation with respect to $\tau$ we have
\begin{equation}\label{eq:first-derivative-of-psi-Chen-continuity-path}
\begin{split}
(1-t)\psi'(\tau) &= \frac{t(3a + 4 -b -ab) + a + b + ab}{a^2 + 2a} \frac{\tau^2 + 2\tau}{(1 + \tau)^2} \\ &\hspace*{1cm} + \frac{-t(3a^2 + 4a + 2b + 2ab) - a^2 + 2b (1 + a)}{a^2 + 2a} \frac{1}{(1 + \tau)^2}.
\end{split}
\end{equation}
Note that the coefficient of $\tau^2/(1+\tau)$ is positive since $t(3a + 4 -b -ab) + a + b + ab > 0$ for all $t\in[0,1]$. But the coefficient of $\tau/(1+\tau)$ may not be positive for all $t\in[0,1]$. In fact, for $b < \frac{a^2}{2(a+1)}$ the coefficient of $\tau/(1+\tau)$ is negative for all $t\in[0,1]$.

\vspace*{1mm}
Suppose $b \geq \frac{a^2}{2(a+1)}$. If $0 \leq t \leq m_0 := \frac{2b(a+1) - a^2}{3a^2 + 4a + 2b(a+1)}$, then both $\psi(\tau) > 0, \psi'(\tau) > 0$ for all $\tau\in(0,a)$ and hence we must have $\psi(\tau)\in(0,b)$ for all $\tau\in(0,a)$. Note that $0 < m_0 < 1$. Therefore, 
\begin{align*}
R (\Omega_a, \Omega_b) \geq \frac{2b(a+1) - a^2}{3a^2 + 4a + 2b(a+1)} 
\end{align*}
whenever $b \geq \frac{a^2}{2(a+1)}$. Moreover, $R (\Omega_a, \Omega_b) < 1$ since there is no cscK metric in $\Omega_a$. In particular, we have $\frac{a+2}{5a + 6} \leq R(\Omega_a, \Omega_a) < 1$ for all normalized K\"ahler class $\Omega_a = 2\pi(\mathrm{C}+a D_\infty)$, where $a>0$. This proves Proposition \ref{prop:bound-of-Chen-Cheng-invariant-for-tcscK}.

\vspace*{1mm}
Differentiating \eqref{eq:first-derivative-of-psi-Chen-continuity-path} once more with respect to $\tau$ we have
\begin{equation*}
(1-t)\psi''(\tau) = \frac{2(a+1)\left(t(3a + 4 + b) + a - b\right)}{(a^2 + 2a) (1 + \tau)^3}.
\end{equation*}
If $b < \frac{a^2}{2(a+1)}$, then $b < a$ and hence $\psi''(\tau) > 0$ for all $\tau\in(0,a)$. It implies $\psi'(\tau) > \psi'(0)$ or all $\tau\in(0,a)$ if $b < \frac{a^2}{2(a+1)}$. But note that
\begin{align*}
\psi'(0) = \frac{-t(3a^2 + 4a + 2b + 2ab) - a^2 + 2b + 2ab}{a^2 + 2a},
\end{align*}
which is negative for $b < \frac{a^2}{2(a+1)}$. So we must have $\psi'(\tau) < 0$ for all $\tau\in[0,a]$ and hence $0=\psi(0) > \psi(a) = b$, which is a contradiction. Thus, for $b < \frac{a^2}{2(a+1)}$ we see that the solution $\psi$ given in \eqref{eq:unique sol for ODE of continuity method on ruled surface} for the above choice of momentum profile does not give a K\"ahler metric $\chi$ which solves the continuity path \eqref{eq:Chen-continuity-path-tcscK}.

\vspace*{2mm}
\section*{Acknowledgements}
The author would like to thank Ved Datar for helpful discussions and continuous encouragement.  This work was supported in part by an Institute Post Doctoral Fellowship (IPDF) from the Indian Institute of Technology (IIT), Bombay. The author would also like to thank Saikat Mazumdar for hosting him as an IPDF at IIT, Bombay. The author thanks Rodrigo Ristow Montes for showing interest in this paper and thanks Zakarias Sj\"ostr\"om Dyrefelt for pointing out the articles \cite{SDyre22-cscK-min-model} and \cite{JianShiSong19-cscK-min-model} related to twisted cscK metrics and for helpful comments and discussions.

\vspace*{2mm}

\end{document}